   \newtheorem{theorem}{Theorem}[section]
   \newtheorem{assum}[theorem]{Assumption}
    \newtheorem{lem}[theorem]{Lemma}
   \newtheorem{prop}[theorem]{Proposition}
     \newtheorem{defi}[theorem]{Definition}
 \newcommand{\ga}{\alpha}
\newcommand{\gb}{\beta}
\newcommand{\gd}{\delta}
\newcommand{\gep}{\varepsilon}       % \ge already exists...
\newcommand{\gk}{\kappa}
\newcommand{\gl}{\lambda}
\newcommand{\gs}{\sigma}
\newcommand{\bP}{{\ensuremath{\mathbf P}} }
\newcommand{\bE}{{\ensuremath{\mathbf E}} }
\newcommand{\cA}{{\ensuremath{\mathcal A}} }
\newcommand{\cF}{{\ensuremath{\mathcal F}} }
\newcommand{\cE}{{\ensuremath{\mathcal E}} }
\newcommand{\cC}{{\ensuremath{\mathcal C}} }
\newcommand{\cD}{{\ensuremath{\mathcal D}} }
\newcommand{\cB}{{\ensuremath{\mathcal B}} }
\newcommand{\cI}{{\ensuremath{\mathcal I}} }
\newcommand{\tS}{{\ensuremath{\tilde S}} }
\newcommand{\ttau}{{\ensuremath{\tilde\tau}} }
\newcommand{\bbR}{\mathbb{R}}
\newcommand{\Z}{\mathbb{Z}}
\newcommand{\N}{\mathbb{N}}
\newcommand{\ind}{\mathbf{1}}
\title{ The scaling limits of a heavy tailed renewal Markov process. }
\author{Sohier Julien \thanks{Universit\'{e} Paris-Dauphine, Ceremade, CNRS UMR 7534, 
   F 75016 Paris France.  e-mail: jusohier@gmail.com }}
\begin{document}

\maketitle

 \begin{abstract}

 In this paper we consider heavy tailed Markov renewal processes and we prove that, suitably renormalised, 
 they converge in law towards the $\ga$-stable regenerative set. 
We then apply these results to the strip wetting model which is a random walk $S$ constrained above a wall and rewarded
or penalized when it hits the strip $[0,\infty) \times [0,a]$ where $a$ is a given positive number.
  The convergence result that we establish allows to characterize the scaling limit of
this process at criticality. 
    
% %   
% %    We first show the convergence of a rescaled heavy tailed  
% % towards . As the state space of 
% %   the governing Markov chain $J$ is continuous, one has to resort to the integrated behavior of the Markov mass renewal function.
%    In a second part we apply this result to the critical regime of the strip wetting model. We compute the scaling limits 
%  of the contact points of the polymer with the strip by giving the explicit form of its Radon Nykodym derivative with respect
%  to the law of the zero level 
%  set of a standard brownian motion. To make use of the  Markov renewal structure inherent to the strip wetting model,
%   we show some precise 
%  estimates about the law of the overshoot, dealing with fluctuation theory for random walks. 

 \end{abstract}

 {\bf Keywords:} heavy tailed Markov renewals processes, scaling limits, fluctuation theory for random walks, regenerative sets.
  
{\bf Mathematics subject classification (2000):} 60F77, 60K15, 60K20, 60K05, 82B27.

    \section{Introduction and main results.}\label{sec1}

   \subsection{The Markov renewal setup.}  
 
   This work mainly focuses on the scaling limits of a Markov renewal process with heavy tailed renewal times.
 By a Markov renewal process, we mean a point process $\tau$ on $\N$ whose increments $\tau_1, \ldots, \tau_n-\tau_{n-1}, \ldots$ 
 are not necessarily
 i.i.d., but governed by a Markov chain $J$ with measurable state space $(\cE,\mu)$. Throughout this paper, 
 we will assume that $(\cE,\mu)$ is a compact Polish space equipped with its standard Borel $\gs$-field $\cB(\cE)$. 
 
  The Markov dependence of $J$ on $\tau$ may be described in 
 various equivalent ways, one of them is the following:
  \begin{enumerate}
   \item First consider a kernel of the form $K_{x,dy}(n)$ with $x,y \in \cE, n \in \Z^+$, that is a function 
  $K: \cE\times \cB(\cE) \times \Z^{+} \mapsto [0,1]$ 
such that for fixed $ n \in \Z^{+}$, $K_{\cdot,\cdot}(n)$ verifies the following: 
   \begin{itemize}
   \item for each $x \in \cE$, $K_{x,\cdot}(n)$ is a $\gs$-finite measure on $\cE$.
    \item for every $F \in \cB(\cE)$, $K_{\cdot,F}(n)$ is a Borel function.
  \end{itemize}
   We will always assume that for every $n \in \N$ and $x \in \cE$, $K_{x,dy}(n)$ has a density
 with respect to $\mu(\cdot)$ that we denote by
 $k_{x,y}(n)$, implying in particular that 
  \begin{equation}
   k(x,y) := \frac{1}{\mu(dy)} \sum_{n \geq 1} K_{x,dy}(n) = \sum_{n \geq 1} k_{x,y}(n)
  \end{equation} 
 is well defined. 
  \item Then sample $J$ as a Markov chain on $\cE$ starting from some initial point $J_0 = x_0 \in \cE$ and with transition kernel
    \begin{equation}
     \bP_{x_0}[ J_{n+1} \in dy|J_n = x] := k(x,y) \mu(dy).
    \end{equation} 
   \item Finally sample the increments $(\tau_i-\tau_{i-1})_{i \geq 1}$ as a sequence of independent, but not identically distributed 
 random variables according to the conditional law:
    \begin{equation}
     \bP_{x_0}[ \tau_{n+1} - \tau_{n} = m|\{J_i\}_{i \geq 0} ] = \frac{k_{J_n,J_{n+1}}(m)}{ k(J_n,J_{n+1})}
 , { \hspace{.2 cm}  } n,m \geq 1.
    \end{equation} 
 
  \end{enumerate}

  Markov renewal processes have
  been introduced  independently by L\'{e}vy \cite{Le} and Smith \cite{Sm}, and their basic properties have been studied  
 by Pyke \cite{Py}, Cinlar \cite{Cin2} and Pyke and Schauffele \cite{PyS} among others. A modern reference  is 
 Asmussen \cite{As}[VII,4], which describes some applications related to these processes, in particular in
 the field of queuing theory. 
 More recently,  they have been widely used
  as a convenient tool to describe phenomena arising in models 
 issued from statistical mechanics, and more particularly in models related to pinning models, such as 
 the periodic wetting model  (\cite{CGZ1} and the monograph \cite{GB}[Chapter 3]) or the $1+1$ dimensional 
 field with Laplacian interaction (\cite{cd1} and \cite{cd2}).

 We will show our results in the case where the kernel $K$ satisfies the following assumptions.
  \begin{assum}\label{as1} We make the following assumptions on the kernel $K$: 
\begin{enumerate}
 \item[$\bullet$] the transition function
 of the Markov chain $J$ is absolutely continuous with respect to $\mu$,
 and its density $k(\cdot,\cdot)$ is continuous on 
 the whole square $\cE^{2}$.

\item[$\bullet$]
  There exist $\ga \in (0,1)$,  a strictly positive continuous bounded function $\Phi(\cdot)$ on $\cE^{2}$ and 
 $L(\cdot)$  a slowly varying function such that  the equivalence
 \begin{equation}\label{BASEQ}
  k_{x,y}(n) \sim \Phi(x,y) \frac{L(n) } {n^{1 + \ga}}
 \end{equation} 
   holds uniformly for $(x,y) \in \cE^{2}$; moreover, we assume that the Markovian renewal is non defective, that is 
for every $x \in \cE$: 
  \begin{equation}
   \int_{ y \in \cE} \sum_{n \geq 1} k_{x,y}(n) \mu(dy) = 1.
    \end{equation} 
 
 \end{enumerate}
  \end{assum}

 A few remarks about these assumptions are in order: 
 
 \begin{enumerate}
  \item[$\bullet$] We will use the following notation which is valid for every measurable $A \in \gs(\tau)$:
  \begin{equation}
   \frac{1}{\mu(dy)} \bP[ A, J_1 \in dy] =: \bP[ A, J_1 = y].
  \end{equation} 
    The fact that $k$ is continuous implies that this definition makes sense everywhere,
 and not only almost everywhere.
 
  \item[$\bullet$]  The strict positivity of the function $\Phi$ implies the strict positivity of the transition 
 kernel of $J$ on the whole $\cE^{2}$. In particular, $J$ is a 
 \textit{regular} Markov chain (see \cite{Fel2}[VIII,7]). Regular Markov chains with strictly positive density are arguably the simplest example of Markov chains with 
 continuous state space which still satisfy the basic ergodic theorem. More precisely, it is well known that 
 every strictly positive regular kernel on a closed interval is ergodic, and also that the ergodicity of such a kernel 
 is equivalent to the existence of a strictly positive stationary distribution which we will denote by $\Pi$ (see \cite{Fel2}[VIII,7, Theorems 1 and 2]).
%   \item[$\bullet$] The second part of Assumption \ref{as1} expresses some kind of long range independence between $\tau$ and $J$; 
%   we stress that this property is verified at least for the Markov renewal 
%  process we consider 
%  in the second part of this paper. 
 \end{enumerate}

  A consequence of equation \eqref{BASEQ} is the fact that the Markov renewal process $\tau$ has steps distribution
 with \textit{infinite mean}. Making use of the Markov renewal theorem (see \cite{GrW} for this result in the case where 
 $J$ is Harris recurrent, which is the case in our regular setup),
  this implies in particular that, as $n \to \infty$,
 the Green function associated to $\tau$ verifies:
 \begin{equation}
  \bP[ n \in \tau] \to 0. 
 \end{equation}

 We point out that even for 
 ordinary renewal processes, the exact rate of decay of the Green function in the case of infinite mean has been a
  longstanding problem which has been solved in great generality only recently by Doney (see \cite{Do}). A consequence of Doney's results 
 is the fact that, given a true renewal $\tau$ with interarrival times given by 
  \begin{equation}
   \bP[\tau_1 = n] = \frac{L(n)}{n^{1+\ga}} 
  \end{equation}  
  where $L(\cdot)$ is some slowly varying function and $\ga \in (0,1)$, the following equivalence holds:
 \begin{equation}\label{DoMM}
   \bP[ n \in \tau] \sim \frac{\ga \sin(\ga \pi)}{\pi} \frac{1}{L(n)n^{1-\ga}}.
 \end{equation}  
  
 The extension of this deep 
result to the framework of Markov renewal theory has been achieved in the case of finite state modulating chain in \cite{CGZ}, but 
 it turns out that their techniques do not extend to the continuous state case we are considering here. 

  As a matter of fact, our first 
  result deals with the integrated behavior of the Markov renewal function. More specifically, for $(x,y,n) \in \cE^{2} \times \N$,
 let us
 denote by $U(n,x,dy)$ the  Markov renewal mass distribution function defined
  by $U(n,x,dy) := \sum_{i=0}^{n} \sum_{k \geq 1}\bP_x[\tau_k =i, J_k \in dy] =: \sum_{i=0}^{n} u(i,x,dy)$.  
%  Note that  $U(n,x,d\mu(y))= \sum_{k \geq 0} \bP_x[ \tau_k \leq n, J_{k} \in d\mu(y)]$.
 The purpose of the section \ref{sec2} of the present work is to give asymptotics 
 on $U$ for large $n$, generalizing to our setup the well known Tauberian theorems which hold
 for the classical renewal. We show the following:
  \begin{theorem}\label{mP2}
    As $n \to \infty$, for every $x \in \cE$, the following weak convergence holds :
  \begin{equation}
  \frac{U(n,x,dy)L(n)}{n^{\ga}} \to \frac{\ga}{\Gamma(1+\ga) \Gamma(1-\ga)}\frac{\Pi[dy]  }{  \bE_{\Pi^{(2)}}[\Phi/k]  }
  \end{equation} 
  where $\Pi^{(2)}$ is the invariant measure associated to the Markov chain $(J_i,J_{i+1})_{i \geq 0}$.
  \end{theorem}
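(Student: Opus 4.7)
The natural approach is to pass to generating functions and combine a perturbation argument for the principal eigenvalue of the kernel $K$ with Karamata's Tauberian theorem. For $\gl>0$, define the bounded operator $\widehat K(\gl)$ on $C(\cE)$ by
\begin{equation*}
\widehat K(\gl) f(x) := \int_\cE \sum_{n \geq 1} e^{-\gl n} k_{x,y}(n) f(y) \mu(dy).
\end{equation*}
Iterating the Markov renewal decomposition gives, as measures in $dy$, $\sum_{n \geq 1} e^{-\gl n} u(n,x,dy) = \widehat K(\gl)(I - \widehat K(\gl))^{-1}(x,dy)$, so the problem reduces to identifying the singularity of $(I - \widehat K(\gl))^{-1}$ as $\gl \downarrow 0$, followed by a Tauberian step.

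The Abelian step rests on the classical identity $\int_0^\infty (1-e^{-\gl t}) t^{-1-\ga}\,dt = \gl^\ga \Gamma(1-\ga)/\ga$ and on the uniformity on the compact square $\cE^{2}$ of the equivalence \eqref{BASEQ}. Together these yield, uniformly in $(x,y)$,
\begin{equation*}
k(x,y) - \hat k_{x,y}(\gl) = c(\gl) \Phi(x,y)(1+o(1)), \qquad c(\gl) := \frac{\Gamma(1-\ga)}{\ga} \gl^{\ga} L(1/\gl),
\end{equation*}
and hence $K - \widehat K(\gl) = c(\gl) M_\Phi + o(c(\gl))$ in operator norm on $C(\cE)$, where $M_\Phi f(x) := \int_\cE \Phi(x,y) f(y) \mu(dy)$ is the integral operator with continuous kernel $\Phi$.

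By Assumption \ref{as1}, $K$ is a compact integral operator on $C(\cE)$ with continuous, strictly positive kernel, so Jentzsch's theorem applies: its dominant eigenvalue (equal to $1$ by non-defectiveness and the existence of the invariant distribution $\Pi$) is simple, has right eigenfunction $\mathbf{1}$ and left eigenfunction $\Pi$, and is separated from the rest of the spectrum by a gap. Since $\widehat K(\gl)$ is a small operator-norm perturbation of $K$, a standard Riesz-projector argument produces a simple principal eigenvalue
\begin{equation*}
\theta(\gl) = 1 - c(\gl) \langle \Pi, M_\Phi \mathbf{1}\rangle + o(c(\gl)),
\end{equation*}
and, since $\Pi^{(2)}(dx,dy) = \Pi(dx) k(x,y) \mu(dy)$, one computes $\langle \Pi, M_\Phi \mathbf{1}\rangle = \int\!\!\int \Phi(x,y) \Pi(dx) \mu(dy) = \bE_{\Pi^{(2)}}[\Phi/k]$.

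Writing $(I - \widehat K(\gl))^{-1} = (1-\theta(\gl))^{-1}\,\mathbf{1} \otimes \Pi + R(\gl)$ with $R(\gl)$ bounded as $\gl \downarrow 0$ (spectral decomposition plus the gap), and testing against any continuous $f \geq 0$, one obtains
\begin{equation*}
\sum_{n \geq 1} e^{-\gl n} \int_\cE f(y) u(n,x,dy) \sim \frac{\ga \int f\,d\Pi}{\Gamma(1-\ga)\, \bE_{\Pi^{(2)}}[\Phi/k]} \cdot \frac{1}{\gl^{\ga} L(1/\gl)}.
\end{equation*}
Karamata's Tauberian theorem (applicable since the summands are non-negative) then yields $\int_\cE f\,dU(n,x,\cdot) \sim \ga \int f\,d\Pi \cdot n^{\ga}/[\Gamma(1+\ga)\Gamma(1-\ga)\, \bE_{\Pi^{(2)}}[\Phi/k]\, L(n)]$, which by splitting a general $f \in C(\cE)$ into positive and negative parts gives exactly the announced weak convergence. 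The main obstacle is the spectral step: lifting \eqref{BASEQ} to an operator-norm Abelian expansion — for which the stated uniformity in $(x,y)$ and the continuity of $\Phi$ on the compact square are essential — and verifying that only the principal eigenvalue contributes to leading order, all subdominant spectral pieces being absorbed into the bounded remainder $R(\gl)$.
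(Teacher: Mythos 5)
Your proof is correct, and the Abelian and Tauberian endpoints coincide with the paper's (your uniform expansion of $k(x,y)-\hat k_{x,y}(\gl)$ is Proposition \ref{lem2}, and your Karamata step is exactly what the paper does via the extended continuity theorem to pass from Proposition \ref{mainprop} to the statement). Where you genuinely diverge is in the middle: the paper proves Proposition \ref{mainprop} by conditioning on the whole modulating trajectory, writing $\int f(y)\,U_{x,dy}(\gl)$ as $\sum_n \bE_x\bigl[\prod_{j<n}\phi_{J_j,J_{j+1}}(\gl)\,f(J_n)\bigr]$, invoking the ergodic theorem for the regular chain $J$ to replace $\frac1n\sum_j \Phi(J_j,J_{j+1})/k(J_j,J_{j+1})$ by $\bE_{\Pi^{(2)}}[\Phi/k]$ inside the exponential, and then applying an elementary summation lemma (Lemma \ref{lem1}); you instead run a Krein--Rutman/Riesz-projector perturbation of the transfer operator $\widehat K(\gl)$ around $K$. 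Your route is the standard quasi-compactness argument and is arguably tighter on error control: all subdominant contributions sit in a uniformly bounded remainder $R(\gl)$, whereas the paper's interchange of the almost-sure ergodic limit with the expectation of the exponential is the delicate point of its argument. The price you pay is the operator-theoretic setup; note in particular that compactness of $K$ on $C(\cE)$ uses $\mu(\cE)<\infty$, which you should record explicitly --- it follows from $\int_\cE k(x,y)\,\mu(dy)=1$ together with $\min_{\cE^2}k>0$ (a consequence of the continuity of $k$ and the strict positivity of $\Phi$). With that remark added, both proofs are complete and deliver the same constant, since $\langle \Pi, M_\Phi \mathbf{1}\rangle=\bE_{\Pi^{(2)}}[\Phi/k]$ by \eqref{Inv}.
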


  By this we mean that for every continuous bounded function $f$ on $\cE$ and every fixed
 $x \in \cE$, as $n \to \infty$, the following convergence holds :
   
 \begin{equation}
  \frac{L(n)}{n^{\ga}} \int_{y \in \cE}  U(n,x,dy) f(y) \to
  \frac{\ga}{\Gamma(1+\ga) \Gamma(1-\ga)}\frac{\int_{y \in \cE} f(y) \Pi[dy]  }{  \bE_{\Pi^{(2)}}[\Phi/k]  }
 \end{equation} 
  
 Note  that $d\Pi^{(2)}[u,v] = \Pi(du) k(u,v) \mu(dv)$ where $\Pi(\cdot)$ is the invariant measure 
 associated to the Markov chain $J$, and thus in particular one has the equality 
  
 \begin{equation}\label{Inv}
  \bE_{\Pi^{(2)}}[\Phi/k]  = \int_{\cE^{2}} \Phi(u,v) \Pi(du) \mu(dv).
 \end{equation}

 It turns out that Theorem \ref{mP2} is sufficient to resolve the large scale behavior of
 the set $\tau$, which we describe by considering the class $\cC_{\infty}$ of closed subsets of $[0,\infty)$ endowed 
 with the Mathéron topology. This topology can be defined as follows: for $t \geq 0, F \in \cC_{\infty}$, we set
 $d_t(F) := \inf (F \cap (t,\infty))$. The function
 $t \mapsto d_{t}(F)$ is right continuous and $F$ may be recovered from $d_{\cdot}(F)$ as
 $F = \{ t \in \bbR^{+}, d_{t^{-}}(F) = 0 \}$. The space of c\`{a}dl\`{a}g functions may be endowed with the standard Skorohod 
 topology, and this topology gives the Mathéron topology \textit{via} the previous identification. Under this topology, $\cC_{\infty}$
 is metrizable, separable and compact,  in particular it is a Polish space. 

 Finally, we introduce the classical notion of $\ga$ stable regenerative set; recall that a subordinator is a non decreasing 
 Lévy process. It is said to be $\ga$-stable ($\ga \in (0,1)$) if its Lévy-Khintchine exponent $\phi(\gl)$ is equal to 
 $\gl^{\ga}$. We consider $\cA_{\ga}$, the  $\ga$ stable regenerative set,
  which is defined as being the closure of the range of 
 the $\ga$ stable subordinator. We stress that $\cA_{\ga}$ is a highly non trivial random element 
 from $\cC_{\infty}$, 
which coincides for $\ga = 1/2$ with the zero level set of a standard brownian motion. 
  
   If we consider $\tau$ as a subset of $\N$, the set $\tau_{(N)} := \frac{\tau \cap [0,N]}{N}$ may be viewed as 
a (random) element of $\cC_{\infty}$. If the Markov renewal process $(\tau,J)$ satisfies Assumption \ref{as1}, we show that the following result holds, it 
 is actually the first main result of this paper. 
 
 \begin{theorem}\label{main1}
   The sequence of rescaled sets $(\tau_{(N)})_{N}$ converges in law towards the set $\cA_{\ga} \cap [0,1] $.
 \end{theorem}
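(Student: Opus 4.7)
The plan is to exploit the fact that the Matheron topology on $\cC_\infty$ makes weak convergence equivalent to convergence of the finite-dimensional distributions of the right-continuous first-passage functional $t \mapsto d_t(\cdot)$, evaluated at any dense set of continuity points of the limit (this follows from viewing $d_\cdot$ as a c\`adl\`ag path, with tightness automatic since the paths take values in $[0,1]$). Since $d_t(\cA_\ga)$ has a continuous law for every $t>0$, it suffices to establish, for every $k\geq 1$ and every $0<t_1<\ldots<t_k<1$, the joint convergence $(d_{t_i}(\tau_{(N)}))_{i\leq k}\Rightarrow (d_{t_i}(\cA_\ga))_{i\leq k}$.

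First I would handle the one-time case $k=1$. Conditioning on the last renewal $\tau_j$ before $Nt$ and on the corresponding state $J_j=y$, one writes
\[
\bP_{x_0}\!\left[\frac{d_{Nt}(\tau)}{N}>b\right]=\int_{\cE}\int_{[0,Nt]}\bar K_{y}(Nb-s)\,U(ds,x_0,dy),
\]
where $\bar K_y(m):=\bP_y[\tau_1>m]$. By Assumption \ref{as1} and a Karamata-type tail summation,
\[
\bar K_y(m)\;\sim\;\frac{L(m)}{\ga\,m^{\ga}}\int_{\cE}\Phi(y,z)\,\mu(dz)\quad\text{uniformly in }y\in\cE,
\]
and the $y$-dependent prefactor is continuous and bounded on $\cE$. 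Feeding this into Theorem \ref{mP2} after the change of variable $s=Nu$, the slowly varying ratio $L(N(b-u))/L(Nu)$ tends to $1$ on compact subsets of $(0,t)$, and integrating in $y$ against $\Pi(dy)$ cancels the factor $\bE_{\Pi^{(2)}}[\Phi/k]$ by the identity \eqref{Inv}. Using $\Gamma(1+\ga)\Gamma(1-\ga)=\pi\ga/\sin(\pi\ga)$, the limit reduces to
\[
\frac{\sin(\pi\ga)}{\pi}\int_0^{t} u^{\ga-1}(b-u)^{-\ga}\,du,
\]
which is precisely $\bP[d_t(\cA_\ga)>b]$ as derived from the potential density $s^{\ga-1}/\Gamma(\ga)$ and the L\'evy measure $\bigl(\ga/\Gamma(1-\ga)\bigr)s^{-1-\ga}\,ds$ of the $\ga$-stable subordinator. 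A refinement retaining the state $J_{\sigma_1}$ at the first renewal after $Nt$ shows moreover that $J_{\sigma_1}$ equilibrates to the stationary law $\Pi$, asymptotically independently of $d_{Nt}/N$, and that the limit law does not depend on $x_0$.

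The joint convergence at $k$ times then follows by induction: the strong Markov property of $(\tau,J)$ at index $\sigma_1$ allows one to restart the one-time argument from the state $J_{\sigma_1}$ at the shifted time $Nt_2-\tau_{\sigma_1}$, and the asymptotic independence of $J_{\sigma_1}$ from $d_{Nt_1}/N$, together with the insensitivity to the starting state, reproduces in the limit the regenerative independence structure of $\cA_\ga$. The main obstacle is the one-time computation: Theorem \ref{mP2} delivers the renewal mass only in the form of an $s$-integrated quantity tested against continuous bounded functions in $y$, whereas what is needed is to integrate the decreasing tail $\bar K_y(Nb-s)$ against $U(ds,x_0,dy)$ over the entire interval $[0,Nt]$. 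Justifying the limit interchange requires uniform upper bounds on $U(n,x,dy)$ together with Potter-type controls on $L$ to dominate the integrand near $s=0$ (where $s^{\ga-1}$ is integrable but $L(Ns)/L(N)$ may fail to converge uniformly); the continuity and strict positivity of $\Phi$ and $k$ on the compact $\cE^{2}$ granted by Assumption \ref{as1} are precisely what make these uniform estimates available.
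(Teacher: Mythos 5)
Your overall architecture is the same as the paper's: reduce to finite-dimensional convergence of $t\mapsto d_t$ via compactness of $\cC_{\infty}$, decompose $\{d_{Nt}/N>b\}$ over the last renewal epoch $s\le Nt$ and its state $y$, and feed the uniform tail asymptotics $\bP_y[\tau_1>m]\sim \frac{L(m)}{\ga m^{\ga}}\int\Phi(y,z)\mu(dz)$ into Theorem \ref{mP2}; your constants check out ($\Gamma(1+\ga)\Gamma(1-\ga)=\ga\pi/\sin(\pi\ga)$ and the cancellation via \eqref{Inv} are exactly as in the paper). The genuine difference is how you pass from the \emph{integrated} asymptotics of $U(n,x,dy)$ to the limit of $\int \bar K_y(Nb-s)\,U(ds,x_0,dy)$, which you rightly flag as the main obstacle. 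The paper resolves it by Abel summation: it rewrites the sum in terms of $U(j,x,dv)$ and the increments $\bP_v[\tau_1=Ny-j-1]$, whose local behaviour \emph{is} known from Assumption \ref{as1}, then splits off $j\le \gep N$ and matches the limit with $\bP_\ga[d_t>y]$ by an integration by parts identity. Your route can be made to work without Abel summation, but not quite for the reason you give: the point is that Theorem \ref{mP2} (plus regular variation) yields convergence of the distribution functions $u\mapsto \frac{L(N)}{N^{\ga}}m_f([0,Nu])$ to $C_f u^{\ga}$, hence weak convergence of the rescaled measures $\frac{L(N)}{N^{\ga}}m_f(N\,du)$ to $C_f\ga u^{\ga-1}du$ on $[0,t]$, and the relevant test function is (asymptotically) $u\mapsto (b-u)^{-\ga}$, which is continuous and \emph{bounded} on $[0,t]$ for $b>t$. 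The singular factor $u^{\ga-1}$ sits in the limit measure, not in the integrand, so no Potter-type control of $L(Ns)/L(N)$ near $s=0$ is needed; the uniform (in $y$) tail equivalence is what lets you factor the integrand, and continuity of $\Phi$, $k$ on the compact $\cE^2$ is indeed what makes that uniformity available. Your treatment of the multi-time marginals by restarting at the first renewal after $Nt_1$ and invoking equilibration of $J$ is more explicit than the paper, which only writes out the $n=1$ case; that part is fine modulo the usual uniformity-in-the-starting-point bookkeeping.
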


  A proof of such a result in the classical renewal framework can be found in \cite{GB}[Theorem 2.7] by making use of the 
 pointwise 
 convergence of the L\'{e}vy exponent of a Poisson process whose range is equal to $\tau_{(N)}$ towards the one of the 
 $\ga$ stable-regenerative set, which directly implies the result (see \cite{Fi});  the same idea is not available in our setup 
 since the increments of $\tau$ are no longer i.i.d. Another idea which does
 not work here has been used in \cite{CGZ1} to prove a very similar result in the  context
 of weakly inhomogeneous wetting models; the authors use in an essential way the finiteness of the state space of the governing 
 Markov chain $J$, and the local behavior of the Green function given in equation \eqref{DoMM}.

 \subsection{Application to the strip wetting model.}

   \subsubsection{ Definition of the model. } 

   The main motivation for proving Theorem \ref{mP2} has been provided by a model which originates from statistical mechanics and 
 which we describe now. 
 
 We consider $ (S_n)_{n \geq 0}$ a random walk such that $S_0 := 0$ and $S_n := \sum_{i = 1}^n X_i$ where 
the $X_i$'s are i.i.d. and $X_1$ has a density $h(\cdot)$ with respect to the Lebesgue measure. 
  We denote by $\bP$ the law of $ S$, and by $\bP_x$ the law of the same process starting from $x$.
  We will assume that $h(\cdot)$ is continuous and bounded on $\bbR$, that $h(\cdot)$ is positive in a neighborhood of the origin,
  that $\bE[X] = 0$ and that $\bE [X^2]=: \gs^2 \in (0,\infty)$.  
 
    The fact that $h$ is continuous and positive in the neighborhood of the origin entails that
 \begin{equation}\label{hypn}
  n_0 := \inf_{n \in \mathbb{N}} \{ (\bP[S_{n} > a],\bP[-S_{n} > a]) \in (0,1)^{2} \} < \infty.
 \end{equation} 
  In the sequel, we will assume that $n_0 = 1$  (and thus 
 that $ (\bP[-S_1 > a],\bP[S_1 > a]) \in (0,1)^{2}$). 
 We stress that our work could be easily extended to the generic $n_0 \geq 2$ case, althought this should lead to 
 some specific technical difficulties (for example one should extend Theorem \ref{main1} to the case where the transition 
 function of $J$ may vanish on $\cE$).

  For $N$ a positive integer, we consider the event $\cC_N := \{ S_ 1 \geq 0, \ldots, S_N \geq 0 \}$.
We define the probability law $\bP_{N,a,\gb}$ on $\bbR^N$ by 
 \begin{equation}
  \frac{d\bP_{N,a,\gb}}{d\bP} := \frac{1}{Z_{N,a,\gb}} \exp\left( \gb \sum_{k=1}^N \ind_{ S_k \in [0,a] } \right) \ind_{ \cC_N} 
\end{equation}
 where  $\gb \in \bbR$ and $Z_{N,a,\gb}$ is the normalisation constant usually called the partition function of the system.
 For technical reasons, for $x \in [0,a]$, we will sometimes need to consider the quantity 
 \begin{equation}
  Z_{N,a,\gb}(x) := \bE_{x} \left[ \exp\left( \gb \sum_{k=1}^N \ind_{ S_k \in [0,a] } \right) \ind_{ \cC_N}  \right].
 \end{equation} 
  Note that $Z_{N,a,\gb} = Z_{N,a,\gb}(0)$.
   
  $\bP_{N,a,\gb}$ is a  $(1+1)-$dimensional model for a linear chain of length $N$ which is 
 attracted or repelled to a defect \textit{strip} of width $a$. By $(1+1)-$dimensional, we mean that the configurations of the linear
chain are described by the trajectories $(i,S_{i})_{i \leq N}$ of the walk, so that we are dealing
with directed models. The strength of this interaction with the strip is tuned by 
 the parameter $\gb$. Regarding the terminology, note that the use of the term \textit{wetting} has become customary to describe 
 the positivity constraint $\cC_N$ and refers to the interpretation of the field as an effective model for the interface of separation 
 between a liquid above a wall and a gas, see \cite{DGZ}. 

 The purpose of this part is to investigate the behavior of $ \bP_{N,a,\gb}$ in the large $N$ limit.
 In particular, we would like to understand when the reward $\gb$ is strong enough to pin the chain near the defect strip, a 
 phenomenon that we will call \textit{localization}, and what are the macroscopic effects of the reward on the system.
  We point out that this kind of questions have been answered 
 in depth in the case of the
 standard wetting model, that is formally in the $a = 0$ case, and that the problem of extending these results to 
 the case where the defects are in a strip 
 has been posed in  \cite[Chapter 2]{GB}.  Note that a closely related pinning model in continuous time has been considered 
and resolved 
 in \cite{CrKMV}; we stress however that their techniques are very peculiar to the continuous time setup.

%    \begin{figure}[h!]
%  \begin{center}
%  \scalebox{0.6}{\input{Stripwetting.pdf_t}}
%  \caption{ A trajectory of $\bP_{41,a,\gb}^{c}$ with $12$ pinned sites. The graph $(n,S_{n})$ is 
%   an interface 
%  separating a dry zone from a wet zone. The interface is conditioned to stay above a wall
%  which attracts or repels it when it comes \emph{close to the wall}}. 
%  \end{center}
%  \end{figure} 
%  

   \subsubsection{The free energy.} A standard way to define localization for our model is by looking at the Laplace 
 asymptotic behavior of the partition function $Z_{N,a,\gb}$ as $N \to \infty$. More precisely,
 one may define the free energy $F(\gb)$ by
 \begin{equation}\label{DFR}
   F(\gb) := \lim_{N \to \infty} \frac{1}{N} \log\left(Z_{N,a,\gb} \right).
 \end{equation}

 The basic observation is that the free 
 energy is non-negative.  In fact, one has:
  \begin{equation}
   \begin{split}
  Z_{N,a,\gb} & \geq  \bE\left[ \exp\left(\gb \sum_{k=1}^N \ind_{ S_k \in [0,a] } \right) \ind_{ S_k > a, k=1,2, \ldots ,N}\right] \\
       & \geq \bP\left[ S_j > a, j = 1 \ldots, N\right]. \\
   \end{split}
  \end{equation} 
    Choose some $M >  a$ such that $\bP[ S_1 \in [a,M]] > 0$. Integrating over $S_{1}$, one gets:
 \begin{equation}
  \bP[ S_j > a, j = 1, \ldots, N] \geq \int_{[a,M]} h(t) \bP_t \left[ S_1 > a, \ldots, S_{N-1} > a \right] dt .
 \end{equation}  
  A consequence of fluctuation theory estimates for killed random walk (see \cite{JS2}[Lemma 3.1] for the discrete case)
  is that
  for fixed $M$, the quantity $ N^{3/2}\bP_t \left[ S_1 > a, \ldots, S_{N-1} > a \right]  \in [c,c']$ for every $N$ and every
 $t \in [a,M]$ where $c,c'$ are positive constants. Thus:
  \begin{equation}
   Z_{N,a,\gb} \geq  \frac{c }{N^{3/2}}\int_{[a,M]} h(t) dt. 
  \end{equation} 
  Therefore $F(\gb) \geq 0$ for every $\gb$. Since the lower bound has been obtained by ignoring the contribution of the paths 
 that touch the strip, one is led to the following:

 \begin{defi}
 The model $\{ \bP_{N,a,\gb} \}$ is said to be localized if $F(\gb) > 0$. 
 \end{defi}

   The relevance of this classical definition is discussed for example in \cite{GB}[Chapter 1] for closely related models. 
 
 It is easy to show that $F(\cdot)$ is a convex function, in particular it is a continuous since $ F(\gb) \leq \gb \vee 0$ for every 
 $\gb \in \bbR$. 
 On the other hand, $F(\cdot)$ is increasing since for fixed $N$, $Z_{N,a,\gb}$ is increasing in $\gb$. Therefore, there exists 
 a critical value $\gb_c^a \in \bbR$ such that the strip  wetting model is localized for $\gb > \gb_c^a$.

  \subsubsection{ Scaling limits of the system. } 
 
    A common feature shared by the strip wetting model and the classical homogeneous one is the fact that the measure $ \bP_{N,a,\gb}$ exhibits
 a remarkable decoupling between the contact level set $\mathcal{I}_N := \{ i \leq N, S_i \in [0,a] \}$ and the excursions of $S$ between 
 two consecutive contact points (see \cite{DGZ} for more details in the standard homogeneous pinning case). Conditioning on 
  $I_N = \{ t_1, \ldots, t_k\}$ and on $(S_{t_1}, \ldots, S_{t_k})$, the \textit{bulk} excursions 
 $e_i = \{ e_i(n) \}_n := \left\{ \{S_{t_i+n} \}_{ 0 \leq n \leq t_{i+1} - t_i} \right\}$
 are independent under  $ \bP_{N,a,\gb}$ and are distributed like the walk $(S',\bP_{ S_{t_i}})$ conditioned on the 
 event  $\left\{ S'_{t_{i+1} - t_i}  = S_{t_{i+1}} , S'_{t_i + j} > a, j \in \{ 1, \ldots, t_{i+1} - t_i -1 \} \right\}$. 
It is therefore clear that 
 to extract scaling limits on $ \bP_{N,a,\gb}$, one has to combine good control over the law of the contact set $\mathcal{I}_N$ 
 and suitable asymptotics properties of the excursions. 
 
  This decoupling is the basis to the resolution of the large scale limits of the laws $\{ \bP_{N,a,\gb} \}$; in fact, we can 
  show that 
 for $\gb > \gb_c^a$, the diffusive rescaling limit of the laws $\{ \bP_{N,a,\gb} \}$ is simply the null function, whereas 
 in the delocalized phase the limiting law is the brownian motion conditioned to stay positive, that is the 
 brownian meander. We stress that the proofs are quite similar although technically more involved that in the classical 
 homogeneous wetting model, see \cite{JS} for more details. 
 
 What is left by these considerations is the critical case. For simplicity, in this regime we describe the scaling limits 
 in terms of the limit of the sequence $\{ \tau_{(N)} \}_{N}$. We point out that  
  additionnal tightness conditions should be made on the free process  $S$ in order 
 to ensure the convergence in law of the entire trajectories towards the reflected brownian motion (see \cite{CaG} to get some 
 insight into this issue). 

 It turns out that the scaling limits of the set of contact times with the strip is almost the $1/2$-stable
 regenerative set; more precisely, we compute its density with respect to the law of $\cA_{1/2}$.
 
 \begin{theorem}\label{main2}
     Consider the set $\tau := \{ j \in \N, S_{j} \in [0,a] \}$ 
 where $S_{j}$ is distributed according to the law $\bP_{N,a,\gb_c^{a}}$. One has the convergence in law: 
  \begin{equation}
   \tau_{(N)} \Rightarrow \cB_{1/2} \cap [0,1]
  \end{equation}  
 where $\cB_{1/2}$ is a random closed set whose law is absolutely continuous with respect to the one of $\cA_{1/2}$ with 
 Radon Nykodym derivative given by 
\begin{equation}
   \frac{\pi}{2}\left(1-\max(\cA_{1/2}\cap[0,1])\right)^{1/2}.
\end{equation} 
 \end{theorem}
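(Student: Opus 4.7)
The plan is to realize the law of the contact set under $\bP_{N,a,\gb_c^a}$ as an absolutely continuous perturbation of a free non-defective Markov renewal process satisfying Assumption \ref{as1} with $\ga=1/2$, apply Theorem \ref{main1} to that reference process, and identify the limiting Radon-Nikodym density as $\frac{\pi}{2}(1-\max)^{1/2}$.

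First I would introduce the Markov renewal embedded in $\tau$: set $\tau_{k+1} := \inf\{n > \tau_k : S_n \in [0,a]\}$ and $J_k := S_{\tau_k}$, so that $J$ takes values in $\cE=[0,a]$. Decomposing the walk between consecutive contacts and observing that $\cC_N$ combined with $S_j \notin [0,a]$ forces the excursions to stay in $(a,\infty)$, one obtains
\begin{equation*}
\bP_{N,a,\gb_c}[\tau_i = t_i, J_i \in dy_i, i \leq k, \tau_{k+1} > N] = \frac{e^{\gb_c k}}{Z_{N,a,\gb_c}} \prod_{i=1}^k K^0_{y_{i-1}, dy_i}(t_i - t_{i-1})\, W_{N-t_k}(y_k),
\end{equation*}
where $K^0_{y,dy'}(n) := \bP_y[S_1, \ldots, S_{n-1} > a, S_n \in dy']/\mu(dy')$ is the strip excursion kernel and $W_m(y) := \bP_y[S_j > a, j = 1, \ldots, m]$ is the survival probability above the strip. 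The critical point $\gb_c^a$ corresponds to the operator $e^{\gb_c} k^0$ (where $k^0 := \sum_n K^0(\cdot,\cdot,n)/\mu$) having Perron eigenvalue $1$; let $v_c$ denote the strictly positive continuous right eigenfunction. The tilted kernel $\tilde K_{y,dy'}(n) := e^{\gb_c} K^0_{y,dy'}(n)\, v_c(y')/v_c(y)$ is then a non-defective Markov renewal kernel, and standard fluctuation theory for finite-variance walks gives $K^0_{y,dy'}(n) \sim \Phi^0(y,y')/n^{3/2}$ uniformly on $\cE^2$, so $\tilde K$ satisfies Assumption \ref{as1} with $\ga = 1/2$.

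Denote by $\tilde P_0$ the law of the free Markov renewal $\tilde\tau$ with kernel $\tilde K$; Theorem \ref{main1} then gives $\tilde\tau_{(N)} \Rightarrow \cA_{1/2} \cap [0,1]$. Using the telescoping identity $\prod e^{\gb_c} K^0 = (v_c(0)/v_c(y_k)) \prod \tilde K$ together with the free-renewal formula $\tilde P_0[\tilde\tau \cap [0,N] = \{t_1, \ldots, t_k\}, J_i \in dy_i] = \prod \tilde K_{y_{i-1},dy_i}(t_i-t_{i-1}) \cdot \tilde P_{y_k}[\tau_1 > N-t_k]$, the wetting measure on $\tau_{(N)}$ is absolutely continuous with respect to $\tilde P_0$ with density
\begin{equation*}
\frac{d\bP_{N,a,\gb_c}}{d\tilde P_0}\bigg|_{\tau_{(N)}} = \frac{v_c(0)}{Z_{N,a,\gb_c}} \cdot \frac{W_{N-\tau_\ell}(J_\ell)/v_c(J_\ell)}{\tilde P_{J_\ell}[\tau_1 > N-\tau_\ell]},
\end{equation*}
where $\tau_\ell$ and $J_\ell$ denote the time and position of the last contact. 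Combining sharp fluctuation asymptotics $W_m(y) \sim c_W \hat V(y)/\sqrt{m}$ (with $\hat V$ the stay-above-strip harmonic function) and $\tilde P_y[\tau_1 > m] \sim A(y)/\sqrt{m}$, a Karamata-type equivalent of $Z_{N,a,\gb_c}$ obtained by integrating these boundary terms against the Markov renewal function of Theorem \ref{mP2}, and the limiting conditional distribution of $J_\ell$, one identifies the limit of the density as $\frac{\pi}{2}(1-\max(\cdot))^{1/2}$. A standard Skorohod-type argument combining this convergence with Theorem \ref{main1} yields $\tau_{(N)} \Rightarrow \cB_{1/2} \cap [0,1]$.

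The hard part will be this last identification step. Since both $W_m(y)$ and $\tilde P_y[\tau_1 > m]$ decay at the common rate $1/\sqrt{m}$, the $(1-\max)^{1/2}$ scaling must emerge from a delicate interplay between the pre-factors $c_W \hat V$, $A$, the Perron eigenfunction $v_c$, the stationary measure $\Pi$ of the embedded chain $J$, and the precise equivalent of $Z_{N,a,\gb_c}$. This requires uniform (in the endpoint argument) fluctuation-theoretic estimates for the strip excursion kernel (which I would obtain via a Wiener-Hopf type factorization for $\Phi^0$), as well as a pointwise asymptotic of the Markov renewal mass function $\tilde u(\cdot, 0, \cdot)$ going beyond the integrated form of Theorem \ref{mP2}.
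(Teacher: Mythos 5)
Your architecture is genuinely different from the paper's: you tilt the defective excursion kernel into the non-defective $\tilde K$ (exactly as in the paper's Section 3.2), invoke Theorem \ref{main1} for the reference Markov renewal, and then try to pass to the limit in the Radon--Nikodym density of $\bP_{N,a,\gb_c^a}$ with respect to that reference law. The paper never touches this density: it computes $\bP_{N,a,\gb_c^{a}}(d_s^N>t)$ directly from the renewal representation, using Abel summation (Lemma \ref{DCG} and the decomposition \eqref{FINI}) precisely because only the \emph{integrated} asymptotics of the Markov renewal mass function (Theorem \ref{mP2}) are available, and matches the answer against an explicit computation of $\bP_{\cB_{1/2}}(d_s>t)$. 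Your plan instead requires a pointwise asymptotic of $\gk(n,0,\cdot)$ and sharp uniform boundary estimates that you acknowledge but do not supply; the Abel-summation device is the paper's way around exactly this obstacle.

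More seriously, the step you yourself flag as ``the hard part'' cannot work as you have set it up. You take the weight after the last contact to be $W_m(y)=\bP_y[S_j>a,\ j=1,\dots,m]$ and you assert $W_m(y)\sim c_W\hat V(y)\,m^{-1/2}$ together with $\tilde P_y[\tau_1>m]\sim A(y)\,m^{-1/2}$. Then the ratio $W_{N-\tau_{l_N}}(J_{l_N})/\tilde P_{J_{l_N}}[\tau_1>N-\tau_{l_N}]$ converges to a bounded function of the last contact \emph{position} alone; since the density must have mean one under the reference law, $Z_{N,a,\gb_c^a}$ would then stay bounded and the density would converge to a function of $J_{l_N}$ only. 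No interplay of the prefactors $c_W\hat V$, $A$, $v_c$, $\Pi$ can manufacture a factor $(1-\max)^{1/2}$, which lives at a different polynomial order in $N-\tau_{l_N}$: such a factor can only come from a \emph{mismatch of decay rates} between the two boundary terms. That mismatch is exactly what the paper's representation \eqref{HPP} provides: there the boundary weight is $\overline F_{y_k}(N-t_k)+\bP_{y_k}[\tau_1=\infty]$, which tends to the strictly positive constant $\bP_{y_k}[\tau_1=\infty]$ while the reference tail is of order $(N-t_k)^{-1/2}$, and it is this same mismatch that produces $Z_{N,a,\gb_c^a}\sim C\sqrt N$ in Lemma \ref{estZ}, on which your normalisation implicitly relies. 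The two weights correspond to different treatments of the positivity constraint on $(t_k,N]$, and you must reconcile your $W_m$ with the paper's term before the argument can proceed: carried out with your stated asymptotics, the computation yields an asymptotically constant density, hence the unweighted $\cA_{1/2}\cap[0,1]$, and not the claimed $\frac{\pi}{2}\left(1-\max(\cA_{1/2}\cap[0,1])\right)^{1/2}$.
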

 
 We stress that the density term that appears in the above result is pushing the rightmost point of the process away from $1$ and 
 closer to the origin.

 \subsection{ Outline of the paper.}
 
 The remaining of this paper is divided into three main parts, the first one dealing with the generic Markovian renewal setup, 
 the second one being devoted to the application of the main result of the first part to the strip wetting model and the last one 
    focusing on results about random walks. More specifically, 
  the exposition of the paper will be organized as follows: 
  \begin{enumerate}
   \item in  section \ref{sec2}, we explicit the asymptotic behavior of the Markov mass renewal function which forms the cornerstone 
 to the proof of our two main results. We then show Theorem \ref{main1} by using this asymptotic behavior.
   \item in section \ref{sec4}, we describe the underlying Markov renewal structure inherent to the strip wetting model. 
   \item in section \ref{FTES}, we  describe some results issued from fluctuation theory for random walks which will be the basis of our 
 approach for the proof of Theorem \ref{main2}.
   \item in section \ref{sec6}, we give a suitable representation for the free energy of the strip wetting model and apply it 
 to the proof of Theorem \ref{main2}. 
   \item in section \ref{sec7}, we prove the results given in section \ref{FTES}, relying on 
 some recent results by Doney \cite{Do3}.
%   making use of a combinatorial identity
%  discovered by Alili and Doney (\cite{AliDon}).
  \end{enumerate}

 \section{ Asymptotic equivalence of the mass Markov renewal function.}\label{sec2}

 \subsection{Notations.}

   Given two 
 kernels $A_{x,dy}(n), B_{x,dy}(n)$ we define
 their convolution 
 \begin{equation}
   (A \ast B)_{x,dy}(n) :=  \sum_{m=0}^{n} \int_{\cE} A_{x,dz}(m) B_{z,dy} (n-m)
 \end{equation}
  and the $k$-fold convolution of the kernel $A$ with itself will be denoted by $A^{\ast k}_{x,dy}$ where by definition 
  $A^{\ast 0}_{x,dy} := \gd_x(dy) \ind_{ n = 0}$.

 For $(x,y) \in \cE^{2}, \gl > 0$,
 we consider the Laplace transform measure $U_{x,dy}(\gl)$ associated to $(u(n,x,dy))_{n}$ defined by
 \begin{equation}
   U_{x,dy}(\gl) := \sum_{j \geq 0} e^{-\gl j} u(j,x,dy). 
 \end{equation}  
 
  Our main technical step will be to prove the following proposition: 
  \begin{prop}\label{mainprop}
    For every $x \in \cE$, as $\gl \searrow 0$, one has the following weak convergence: 
   \begin{equation}\label{impor}
 \frac{\Gamma(1-\ga) \gl^{\ga} L(1/\gl)  \bE_{\Pi^{(2)}}[\Phi/k] }{ \ga }   U_{x,dy}(\gl) \to  \Pi[dy].
   \end{equation}
 
  \end{prop}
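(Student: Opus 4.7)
The plan is to convert \eqref{impor} into a perturbation problem for the transition operator $P$ of $J$ and to solve it using the Perron--Frobenius spectral structure of $P$. For a continuous bounded $f:\cE\to\bbR$, put
\[
 w_\gl(x) := f(x) + \int_{\cE} f(y)\, U_{x,dy}(\gl), \qquad \hat{k}_{x,z}(\gl):=\sum_{n\ge 1}e^{-\gl n} k_{x,z}(n).
\]
Summing the Markov renewal identity against $e^{-\gl n} f(y)\mu(dy)$ shows that $w_\gl$ solves the linear equation
\[
(I - \hat{P}_\gl)w_\gl = f, \qquad \hat{P}_\gl g(x):=\int_{\cE}\hat{k}_{x,z}(\gl)g(z)\mu(dz),
\]
with $\hat{P}_0 = P$ the transition operator of $J$. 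Setting $\gep(\gl) := \Gamma(1-\ga)\gl^\ga L(1/\gl)/\ga$, one is reduced to showing $\gep(\gl) w_\gl(x) \to \int f\,d\Pi / \bE_{\Pi^{(2)}}[\Phi/k]$ uniformly in $x$.

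The first technical input is a uniform Tauberian expansion that lifts \eqref{BASEQ} to the operator level. Interpreting $\sum_n(1-e^{-\gl n})k_{x,y}(n)$ as a Riemann sum and using $\int_0^\infty(1-e^{-t})t^{-1-\ga}dt = \Gamma(1-\ga)/\ga$, the uniformity of the equivalence \eqref{BASEQ} on the compact set $\cE^2$ yields
\[
 k(x,y)-\hat{k}_{x,y}(\gl) = \gep(\gl)\,\Phi(x,y)\bigl(1+o(1)\bigr)\quad\text{uniformly in }(x,y)\in\cE^2.
\]
Consequently, defining $R_\gl:=P-\hat{P}_\gl$ and $Rg(x):=\int\Phi(x,z)g(z)\mu(dz)$, one obtains $R_\gl/\gep(\gl)\to R$ in operator norm on $C(\cE)$, and the fixed point equation becomes $(I-P)w_\gl = f-R_\gl w_\gl$.

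The second ingredient is the spectral structure of $P$. Since $k$ is continuous and strictly positive on the compact square $\cE^2$, a Krein--Rutman / Perron--Frobenius--Jentzsch argument shows that $P$ is a compact positive operator on $C(\cE)$ with $1$ as a simple dominant eigenvalue, right eigenfunction the constants $\mathbf{1}$ and left eigenmeasure $\Pi$, and a spectral gap. In particular $(I-P)$ is boundedly invertible on $E_0:=\{g\in C(\cE):\int g\,d\Pi=0\}$. Write $w_\gl = a_\gl \mathbf{1}+g_\gl$ with $a_\gl:=\int w_\gl\, d\Pi$ and $g_\gl\in E_0$. Integrating $(I-P)w_\gl = f-R_\gl w_\gl$ against $\Pi$ and using $\Pi P=\Pi$ gives the Fredholm compatibility identity
\[
 \int_{\cE} R_\gl w_\gl\,d\Pi = \int_{\cE} f\,d\Pi,
\]
while projection onto $E_0$ yields $g_\gl = (I-P)^{-1}_{|E_0}\bigl(\Pi_\perp f-\Pi_\perp R_\gl w_\gl\bigr)$ with $\Pi_\perp h := h - \int h\,d\Pi$.

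To conclude, expand $R_\gl w_\gl = a_\gl R_\gl \mathbf{1}+R_\gl g_\gl$, use $\int(R\mathbf{1})\,d\Pi=\bE_{\Pi^{(2)}}[\Phi/k]$ (which is exactly \eqref{Inv}), the operator bound $\|R_\gl\|_{\mathrm{op}} = O(\gep(\gl))$, and the asymptotics $R_\gl\mathbf{1}(x)\sim\gep(\gl)\int\Phi(x,z)\mu(dz)$ uniformly in $x$, to rewrite the two displays above as
\[
 a_\gl\,\gep(\gl)\,\bE_{\Pi^{(2)}}[\Phi/k]\,(1+o(1)) + O\bigl(\gep(\gl)\|g_\gl\|_\infty\bigr) = \int f\,d\Pi,
\]
together with $\|g_\gl\|_\infty\le \|(I-P)^{-1}_{|E_0}\|\,\|f\|_\infty+O(\gep(\gl))\bigl(|a_\gl|+\|g_\gl\|_\infty\bigr)$. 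A short bootstrap argument then gives $\sup_{\gl}\bigl(\|g_\gl\|_\infty+\gep(\gl)|a_\gl|\bigr)<\infty$, from which one deduces $\gep(\gl)a_\gl\to \int f\,d\Pi/\bE_{\Pi^{(2)}}[\Phi/k]$ and $\gep(\gl)g_\gl\to 0$ uniformly on $\cE$, i.e.\ the desired uniform convergence of $\gep(\gl)w_\gl(x)$. Since $w_\gl(x)-\int f(y)U_{x,dy}(\gl)=f(x)$ is bounded and hence absorbed by the factor $\gep(\gl)$, this gives exactly the weak convergence asserted in \eqref{impor}. The main obstacle is the uniform Tauberian estimate $R_\gl=\gep(\gl)R+o(\gep(\gl))$ in operator norm; once it is available, the regular perturbation of $P$ on its one-dimensional kernel, together with the Fredholm compatibility, forces both the correct prefactor $\gep(\gl)^{-1}$ and the correct constant $1/\bE_{\Pi^{(2)}}[\Phi/k]$.
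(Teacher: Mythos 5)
Your argument is correct in outline, and its second half is genuinely different from the paper's. The first half is not: your ``uniform Tauberian expansion'' $k(x,y)-\hat{k}_{x,y}(\gl)=\gep(\gl)\Phi(x,y)(1+o(1))$ is exactly the paper's Proposition \ref{lem2} (since $k(x,y)-\hat k_{x,y}(\gl)=k(x,y)(1-\phi_{x,y}(\gl))$), and the ``Riemann sum'' justification you wave at is precisely where the paper has to work: the uniformity over $(x,y)\in\cE^{2}$ is obtained there by summing by parts and invoking Lemma \ref{lem8}, not by a one-line Abelian computation, so this step deserves the same care. Where you diverge is in how the expansion is propagated to $U_{x,dy}(\gl)$. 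The paper conditions on the whole trajectory of $J$, writes $\int f\,U_{x,dy}(\gl)$ as $\sum_n\bE_x[\exp(\sum_{j<n}\log\phi_{J_j,J_{j+1}}(\gl))\,;J_n=y]$, applies the ergodic theorem to $\frac1n\sum_j\Phi(J_j,J_{j+1})/k(J_j,J_{j+1})$, and sums via the elementary Lemma \ref{lem1}; you instead view $\hat P_\gl$ as a rank-regular perturbation of the transition operator $P$ and extract the constant from the Fredholm compatibility condition on the one-dimensional Perron eigenspace. Your route buys a cleaner and arguably more robust conclusion (uniform in $x$, no interchange of a pathwise ergodic limit with an expectation of an exponential, which is the least transparent step of the paper's proof), at the price of needing the spectral gap of $P$ on a sup-norm space; that input is available here because $k$ is continuous and strictly positive on the compact square (so $P$ is compact and primitive, and Jentzsch/Krein--Rutman applies), and it is morally the same uniform ergodicity the paper imports from Feller's theory of regular kernels. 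Two small points to tidy up: you should check that $\hat P_\gl$ has norm at most $e^{-\gl}<1$ on bounded functions so that $w_\gl=(I-\hat P_\gl)^{-1}f$ is well defined, and you should carry out the bootstrap on the space of bounded measurable functions rather than $C(\cE)$ unless you add the (unstated) hypothesis that each $k_{x,y}(n)$ is jointly continuous.
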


    To prove Proposition \ref{mainprop}, we will need some technical results which we summarize in the next section.

  \subsection{ Proof of Proposition \ref{mainprop}}

   The Markov renewal equation writes: 
  \begin{equation}
   u(n,x,dy) = \sum_{k \geq 0} K_{x,dy}^{\ast k}(n) 
  \end{equation}  
  which implies 
   \begin{equation}
    U(n,x,dy) = \sum_{k \geq 0} \sum_{j = 0}^{n}  K_{x,dy}^{\ast k}(j).
   \end{equation} 

 For conciseness, we will denote by $z := (x,y)$  an element of $\cE^{2}$.  Note that Assumption \ref{as1} implies in
particular that $0 < \min_{\cE^{2}} \Phi \leq \max_{\cE^{2}} \Phi < \infty.$

For $z \in \cE^{2}, \gl > 0$, define
\begin{equation}
 \phi_{z}(\gl) :=  \bE_{x}[ e^{-\gl \tau_1}| J_1 = y]. 
\end{equation}  

 We are able to control the behavior of $\phi_{z}(\cdot)$ close to the origin, and this uniformly on $\cE^{2}$. 
 
  \begin{prop}\label{lem2}
    As $\gl \searrow 0$, one has:
    \begin{equation}
     \sup_{z := (x,y) \in \cE^{2}} \left| k(x,y) \frac{(1-\phi_{z}(\gl))}{\gl^{\ga} L(1/\gl)} - \frac{ \Phi(z) \Gamma(1-\ga)}{\ga  } \right| \to 0.
    \end{equation}  
  \end{prop}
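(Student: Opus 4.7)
The plan is to compute the conditional Laplace transform explicitly and then apply a uniform Karamata-type argument based on the uniform tail equivalence \eqref{BASEQ}. Conditionally on $\{J_0=x, J_1=y\}$, the law of $\tau_1$ is $k_{x,y}(n)/k(x,y)$, so
\begin{equation}
k(x,y)\bigl(1-\phi_z(\gl)\bigr) = \sum_{n\geq 1}(1-e^{-\gl n})\,k_{x,y}(n).
\end{equation}
The goal then reduces to proving that the right-hand side divided by $\gl^{\ga}L(1/\gl)$ converges to $\Phi(z)\Gamma(1-\ga)/\ga$, uniformly in $z \in \cE^{2}$.

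First I would exploit the uniform equivalence in Assumption \ref{as1}: given $\gep>0$, choose $N=N_{\gep}$ (independent of $z$) such that
\begin{equation}
\Bigl| k_{x,y}(n) - \Phi(z)\,L(n)/n^{1+\ga}\Bigr| \leq \gep\, L(n)/n^{1+\ga} \qquad \text{for all } n\geq N,\ z\in\cE^{2},
\end{equation}
which is possible because $\Phi$ is bounded on $\cE^{2}$. Split the sum at $N$. For the head $\sum_{n<N}(1-e^{-\gl n})k_{x,y}(n)$, use $1-e^{-\gl n}\leq \gl n$ and the fact that $k_{x,y}(n)$ is uniformly bounded in $z$ (by continuity of $k$ on the compact space $\cE^{2}$ together with $\sum_n k_{x,y}(n)=k(x,y)$ being bounded). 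This yields a contribution of size $O(\gl N_\gep)$ uniformly in $z$, which is $o(\gl^{\ga}L(1/\gl))$ as $\gl\searrow 0$ since $\ga<1$.

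For the tail $n\geq N$, replace $k_{x,y}(n)$ by $\Phi(z)L(n)/n^{1+\ga}$ modulo an error bounded by $\gep\sum_{n\geq N}(1-e^{-\gl n})L(n)/n^{1+\ga}$. Here I would invoke the classical (scalar) Karamata computation, namely the asymptotic
\begin{equation}
\sum_{n\geq 1}(1-e^{-\gl n})\frac{L(n)}{n^{1+\ga}} \sim \frac{\Gamma(1-\ga)}{\ga}\,\gl^{\ga}L(1/\gl) \qquad (\gl\searrow 0),
\end{equation}
which is a standard consequence of Abel/Tauberian theory for slowly varying functions. Since $\Phi(z)$ factors out, dividing by $\gl^{\ga}L(1/\gl)$ gives
\begin{equation}
\frac{k(x,y)(1-\phi_{z}(\gl))}{\gl^{\ga}L(1/\gl)} = \Phi(z)\frac{\Gamma(1-\ga)}{\ga}(1+o(1)) + O(\gep)\,\frac{\Gamma(1-\ga)}{\ga} + o(1),
\end{equation}
where the $o(1)$ terms are uniform in $z$.

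Letting first $\gl\searrow 0$ and then $\gep\searrow 0$ yields the claim. The only delicate point is ensuring that every estimate is uniform in $z$: this comes down to (i) the strict positivity and uniform bounds on $\Phi$ (from Assumption \ref{as1} together with continuity on the compact $\cE^{2}$), which makes the truncation threshold $N_{\gep}$ independent of $z$, and (ii) the fact that the head of the sum is controlled using continuity/compactness rather than anything $z$-dependent. There is no genuine obstacle; the main technical point is to keep the constants uniform, and this is exactly what the compactness of $\cE$ and the continuity assumptions in Assumption \ref{as1} are designed for.
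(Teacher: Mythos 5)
Your proof is correct, and it takes a genuinely different route from the paper's. The paper works with the integrated tail measure $U_{z}([0,n]) := \sum_{j \leq n} \bP_{x}[\tau_1 > j, J_1 = y]$, establishes its uniform asymptotics $U_{z}([0,n]) \sim n^{1-\ga}L(n)\Phi(z)/(\ga(1-\ga))$, and transfers these to the Laplace transform via a bespoke uniform version of the continuity theorem (Lemma \ref{lem8}), applied after rescaling the measures; the identity $1-\phi_{z}(\gl) = (1-e^{-\gl})\Psi_{z}(\gl)/k(x,y)$ then yields the proposition. You instead work directly with $k(x,y)(1-\phi_{z}(\gl)) = \sum_{n\geq 1}(1-e^{-\gl n})k_{x,y}(n)$, truncate at a $z$-independent level $N_{\gep}$ supplied by the uniformity of the equivalence \eqref{BASEQ}, bound the head by $O(\gl N_{\gep}\sup k) = o(\gl^{\ga}L(1/\gl))$, and observe that in the tail the $z$-dependence reduces to the bounded multiplicative factor $\Phi(z)$ plus a relative error $\gep$, so that all uniformity questions collapse to the single scalar Abelian asymptotic $\sum_{n}(1-e^{-\gl n})L(n)n^{-1-\ga} \sim \ga^{-1}\Gamma(1-\ga)\gl^{\ga}L(1/\gl)$, whose constant you have right. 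Your route is more elementary and bypasses Lemma \ref{lem8} entirely; the paper's route buys a reusable uniform Tauberian statement formulated for general families of measures, at the cost of the detour through the integrated tail. Both arguments consume exactly the same inputs, namely the uniformity in Assumption \ref{as1} and the boundedness of $\Phi$ and $k$ on the compact $\cE^{2}$.
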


 To prove Proposition \ref{lem2}, we will need the following result:
 
  \begin{lem}\label{lem8}
      Given $\rho > 0$ and a family of atomfree measures $V^{(n)}_{z}$ on $\bbR^{+}$ 
 indexed by $z \in \cE^{2}, n \in \N$. Denote by $\Psi^{(n)}_{z}(\gl)$ their Laplace transforms.
  Assume that they verify that for every $u > 0$, one has 
 \begin{equation}\label{Hyp}
    \lim_{n \to \infty} \sup_{z \in \cE^{2}} \left| V^{(n)}_{z}([0,u]) -u^{\rho} \right| = 0.
 \end{equation} 
   and  assume moreover that there exists $d > 0$  such that 
  $\sup_{(n,z)\in \N \times \cE^{2}} \Psi^{(n)}_{z}(d) < \infty$. Then 
   for every $\gl > d $:
  \begin{equation}
   \lim_{n \to \infty}   \sup_{z \in \cE^{2}}  \left| \Psi^{(n)}_{z}(\gl)  - \frac{\Gamma(\rho +1)}{\gl^{\rho}} \right| = 0.
  \end{equation} 
    \end{lem}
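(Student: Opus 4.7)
The plan is to recognize this as a uniform version of a classical Tauberian/continuity theorem for Laplace transforms, in which the cumulative distribution functions converge uniformly in the auxiliary parameter $z$. The key is to express both Laplace transforms as weighted integrals of the distribution functions, split the integral at a cutoff $M$, and control the tail using the uniform Laplace bound at the point $d$.

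First I would rewrite, using Fubini applied to $e^{-\lambda u} = \lambda \int_u^\infty e^{-\lambda t}\,dt$,
\begin{equation*}
\Psi^{(n)}_z(\lambda) = \lambda \int_0^\infty e^{-\lambda u} V^{(n)}_z([0,u])\,du, \qquad \frac{\Gamma(\rho+1)}{\lambda^\rho} = \lambda \int_0^\infty e^{-\lambda u} u^\rho\,du,
\end{equation*}
so that the difference of the two quantities equals $\lambda \int_0^\infty e^{-\lambda u}\bigl(V^{(n)}_z([0,u]) - u^\rho\bigr)\,du$. Next I would upgrade the pointwise-in-$u$, uniform-in-$z$ hypothesis \eqref{Hyp} to convergence which is uniform in $(z,u)$ on compact $u$-intervals. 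This is a standard Polya-type argument: since $u \mapsto V^{(n)}_z([0,u])$ is non-decreasing and the limit $u \mapsto u^\rho$ is continuous, one fixes a finite grid $0=u_0<u_1<\cdots<u_K=M$ with $u_{k+1}^\rho - u_k^\rho < \varepsilon$ and applies the hypothesis at each grid point to obtain $\sup_{z \in \cE^2}\sup_{u \in [0,M]}|V^{(n)}_z([0,u])-u^\rho| \to 0$.

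The integral over $[0,M]$ is then bounded by $2\varepsilon \lambda M$ for $n$ large, so the only real question is the tail. For $u \geq M$ and $\lambda > d$ I would use the factorisation $e^{-\lambda u} = e^{-(\lambda-d)u} e^{-d u} \leq e^{-(\lambda-d)M} e^{-du}$, which gives
\begin{equation*}
\lambda \int_M^\infty e^{-\lambda u} V^{(n)}_z([0,u])\,du \leq \lambda e^{-(\lambda - d)M} \int_0^\infty e^{-du} V^{(n)}_z([0,u])\,du = \frac{\lambda}{d}\, e^{-(\lambda-d)M}\, \Psi^{(n)}_z(d),
\end{equation*}
so the uniform bound $\sup_{n,z}\Psi^{(n)}_z(d) < \infty$ makes this tail at most $\frac{\lambda C}{d} e^{-(\lambda-d)M}$, uniformly in $(n,z)$. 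The analogous tail $\lambda \int_M^\infty e^{-\lambda u} u^\rho\,du$ vanishes as $M \to \infty$ by elementary calculus. Choosing $M$ first to make the two tails small, then $n$ large to make the $[0,M]$ contribution small, yields the desired uniform convergence.

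The only genuinely delicate step is the tail estimate, and it hinges on having the hypothesis $\sup_{n,z}\Psi^{(n)}_z(d)<\infty$ at the single abscissa $d<\lambda$; without such a bound the cumulative functions could a priori grow arbitrarily fast outside $[0,M]$ and spoil the integrability. Everything else is routine bookkeeping around the splitting argument.
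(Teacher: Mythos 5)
Your proof is correct and follows essentially the same route as the paper's: both convert the Laplace transform into an integral of the distribution function $u\mapsto V^{(n)}_{z}([0,u])$, exploit the uniform hypothesis on a compact interval $[0,M]$, and control the tail by the bound $e^{-\lambda u}\le e^{-(\lambda-d)M}e^{-du}$ together with $\sup_{n,z}\Psi^{(n)}_{z}(d)<\infty$. The only cosmetic difference is that the paper integrates by parts on $[0,t]$ and invokes dominated convergence where you use a global Fubini identity plus a Polya-type uniformization over a finite grid.
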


 Note that in our case, the sequence of measures $V^{(n)}$ will have a density with respect to Lebesgue measure,
 and thus will be without atoms.
 
 \begin{proof}[Proof of Lemma \ref{lem8}]

 Let  $t > 0$ be fixed; for any $\gl > a$, 
uniformly on $z \in \cE^{2}$, as $n \to \infty$, we have:
  \begin{equation}\label{Pre}
   \int_{[0,t]} e^{- \gl u} V^{(n)}_{z}(du) \to \int_{[0,t]} e^{- \gl u} \rho u^{\rho -1}du. 
  \end{equation} 
 Indeed, integrating by part in the left hand side above and making use of the fact that the sequence $V^{(n)}$ is atomfree, one obtains:
  \begin{equation}
      \int_{[0,t]} e^{- \gl u} V^{(n)}_{z}(du) 
   =   e^{-\gl t} V^{(n)}_{z}(t)    +   \gl \int_{[0,t]} e^{- \gl u} V^{(n)}_{z}(u) du  
  \end{equation} 
 and making use of the uniformity statement of equation \eqref{Hyp} (which allows us in particular to
 use dominated convergence) and reintegrating 
 by part,
  equation \eqref{Pre} is proved. 

 On the other hand, still for $\gl > d$, it is clear that 
   \begin{equation}
    \int_{[t,\infty)} e^{-\gl u } V^{(n)}_{z}(du) \leq e^{-(\gl -d) t} \sup_{(n,z)\in \N \times \cE^{2}} \Psi^{(n)}_{z}(d) 
   \end{equation} 
 which can be made arbitrarily small (independently of $z,n$) as soon as $t$ is large enough. Noting that 
  \begin{equation}
   \int_{[0,\infty)} e^{- \gl u} \rho u^{\rho -1}du = \frac{\Gamma(\rho +1)}{ \gl^{\rho}},
  \end{equation} 
 we get the result.

 \end{proof}
 
  Now we prove Proposition \ref{lem2}.
 
 \begin{proof}[Proof of Proposition \ref{lem2}]
 For $z \in \cE^{2}$, we define the (infinite mass) measure on $\bbR^{+}$
 $U_{z}([0,n]) := \sum_{j \leq n}  \bP_{x}[ \tau_1 > j, J_1 = y]$. The following equivalence  holds \textit{uniformly} on $\cE^{2}$:
 \begin{equation}\label{eqeqU}
  U_{z}([0,n]) \sim  \sum_{j \leq n}\frac{L(j)  \Phi(z) }{ \ga j^{\ga}} \sim \frac{n^{1-\ga} L(n) \Phi(z) }{\ga(1-\ga)}.
 \end{equation} 
   Indeed, we use the general fact that if $u_n(z)$ is a positive sequence depending on $z \in \cE^{2}$ verifying  
 $\sup_{z \in S}|u_n(z)| < \infty$ and, as $n \to \infty$,
  \begin{equation}
  \sup_{z \in \cE^{2}} \left| \frac{n^{\ga} u_{n}(z)}{L(n)} - \Phi(z) \right| \to 0,
 \end{equation} 
  then 
  \begin{equation}
   \sup_{z \in \cE^{2}} \left| \frac{1-\ga}{n^{1-\ga}L(n)}\sum_{j \leq n} u_{j}(z) - \Phi(z) \right| \to 0.
  \end{equation} 
  Recalling the standard equivalence
  \begin{equation}
   \sum_{j=1}^{n} \frac{L(j)}{j^{\ga}} \sim \frac{n^{1-\ga}}{1-\ga} L(n),
  \end{equation} 
  the proof of this convergence is straightforward. 
  
%  For every $z \in \cS$, using the triangle inequality, we get:
%  \begin{equation}
%  \begin{split}
%   & \left| \frac{1-\ga}{n^{1-\ga}L(n)}\sum_{j \leq n} u_{j}(z) - \Phi(z) \right| \\
%    & \phantom{iiii} \leq \frac{1-\ga}{n^{1-\ga}L(n)} \sum_{j \leq n} \frac{L(j)}{j^{\ga}}
%  \left| \frac{j^{\ga}u_{j}(z)}{L(j)} - \Phi(z) \right|  \\
%     & \phantom{iiii} + \left( \sup_{z \in \cS} |\Phi(z)| \right)  \left| \frac{1-\ga}{n^{1-\ga}L(n)} \sum_{ j \leq n}
%  \frac{L(j)}{j^{\ga}} - 1 \right|.
%  \end{split}
%  \end{equation} 
%   The second term in the right hand side above vanishes when $n \to \infty$ ; for the first one, 
%  we fix $\gep > 0$, an integer $n_0 > 0$ and we write:
%  \begin{equation}
%   \begin{split}
%       \frac{1-\ga}{n^{1-\ga}L(n)} \sum_{j \leq n} \frac{L(j)}{j^{\ga}}
%  \left| \frac{j^{\ga}u_{j}(z)}{L(j)} - \Phi(z) \right| \leq \frac{1-\ga}{n^{1-\ga}L(n)} \sum_{j \leq n_0} \ldots +
%  \frac{1-\ga}{n^{1-\ga}L(n)} \sum_{n_0 \leq j \leq n} \ldots
%   \end{split}
%  \end{equation} 
%  and the first term above is easily seen to be smaller than $\gep$ for $n_0$ large enough uniform for $z \in \cS$ (since the sequence 
%   $\left(\sup_{z \in \cS} \left| \frac{n^{\ga} u_{n}(z)}{L(n)} - \Phi(z) \right|\right)_{n}$ is bounded). As for the second one, just 
% fix $n_0$ such that 
%  \begin{equation}
%   \sup_{z \in \cS} \left| \frac{j^{\ga}u_{j}(z)}{L(j)} - \Phi(z) \right| \leq \gep
%  \end{equation} 
%   as soon 
%  as $j \geq n_0$ to get that it is smaller than $ \frac{(1-\ga)\gep}{n^{1-\ga}L(n)}  \sum_{n \geq n_0} \frac{L(j)}{j^{\ga}}$ to conclude.
%  

 We denote by $\Psi_{z}(\gl)$ the Laplace transform associated to the measure $U_{z}(\cdot)$. Integrating by part, we
 then have the equality which is valid for $z \in \cE^{2}, \gl \geq 0$:
 \begin{equation}\label{eqeqPhi}
\begin{split}
  1- \phi_{z}(\gl) & = (1-e^{-\gl})\sum_{n \geq 0} e^{-\gl n} \bP_{x}[\tau_1 > n|J_1 = y]  \\
   & = (1-e^{-\gl}) \frac{\Psi_{z}(\gl)}{ k(x,y)}.              
                 \end{split}
 \end{equation}

 Making use of equation \eqref{eqeqU}, we give the asymptotic behavior of $\Psi_{z}(\cdot)$ close to the origin; more precisely,
 uniformly on $z \in \cE^{2}$, the convergence 
  \begin{equation}\label{AMF}
   \Psi_{z}(\gl) \sim \frac{L(1/\gl)\Phi(z) \Gamma(1-\ga)}{\ga \gl^{1-\ga}}
  \end{equation} 
  holds as $\gl \searrow 0$.  Clearly, if this is true, making use of equation \eqref{eqeqPhi},
 Proposition \ref{lem2} will be proved.
 
  We define a sequence of measures $V^{(n)}_{z}$ on $\bbR^{+}$ by $V^{(n)}_{z}([0,u]) := \frac{U_{z}([0,nu])}{U_{z}([0,n])}$. 
 Equation \eqref{eqeqU} implies that the hypothesis of Lemma \ref{lem8} is verified by the sequence $V_{z}^{(n)}$ with 
 $\rho = 1- \ga$ with an arbitrary $d > 0$. It is plain that, for every fixed $\gl > 0$,
 \begin{equation}
  \Psi^{(n)}_{z}(\gl) = \frac{\Psi_{z}(\gl/n)}{U_{z}([0,n])}.
 \end{equation} 
  Making use of Lemma \ref{lem8}, as $n \to \infty$, the convergence 
 \begin{equation}
  \frac{\Psi_{z}(\gl/n)}{U_{z}([0,n])} \to \frac{\Gamma(2 - \ga)}{\gl^{1-\ga}}
 \end{equation} 
    holds uniformly on $\cE^{2}$. Hence we get:
  \begin{equation}
  \Psi_{z}(\gl/n) \sim \frac{\Phi(z)L(n) \Gamma(2-\ga)}{\ga(1-\ga) (\gl/n)^{1-\ga}}
  \end{equation} 
 and this still holds uniformly on $\cE^{2}$. Extending this convergence to any sequence which decreases to zero (which is 
 straightforward), making use of standard
 properties of slowly varying functions and of the Gamma function, one gets the equivalence \eqref{AMF}.
  \end{proof}

 To finally prove Proposition \ref{mainprop}, we will need the following easy lemma. 
 \begin{lem}\label{lem1} Let $(c_n)_{n}$ be a positive sequence  which converges towards
 $c > 0$, $u_k$ a positive sequence 
 which converges towards $u > 0$ and $g(\cdot)$ a function $\bbR^{+} \to \bbR$ which is $o(\gl)$ as $\gl \searrow 0$. 
 Then, as $\gl \searrow 0$, one has:  
    \begin{equation}\label{eq12}
      \sum_{k \geq 0} c_k e^{-k (\gl u_k + g(\gl))} \sim \frac{c}{u \gl}.
    \end{equation} 
   \end{lem}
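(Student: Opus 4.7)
The plan is to reduce the claim to a geometric-sum estimate with controlled perturbations. Since the asymptotic equivalence is equivalent to $\gl \sum_{k \geq 0} c_k e^{-k(\gl u_k + g(\gl))} \to c/u$ as $\gl \searrow 0$, I will prove the latter statement by a sandwich argument.

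Fix $\epsilon \in (0, u/2)$ arbitrary. By the convergences $c_k \to c$ and $u_k \to u$ there exists an integer $N$ such that $|c_k - c| < \epsilon$ and $|u_k - u| < \epsilon$ for all $k \geq N$. Moreover, since $g(\gl) = o(\gl)$, one can choose $\gl_0 > 0$ such that $|g(\gl)| \leq \epsilon \gl$ for every $\gl \in (0,\gl_0)$. Consequently, for $k \geq N$ and $\gl \in (0,\gl_0)$ the exponent satisfies
\begin{equation*}
\gl(u - 2\epsilon) \leq \gl u_k + g(\gl) \leq \gl(u + 2\epsilon),
\end{equation*}
which is strictly positive by the choice of $\epsilon$.

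I would then split the sum at $k = N$. The finite part $\sum_{k < N} c_k e^{-k(\gl u_k + g(\gl))}$ is bounded uniformly in $\gl \in (0,\gl_0)$ by a constant depending only on $N$ and $\max_{k<N} c_k$, so when multiplied by $\gl$ it vanishes in the limit. For the tail, combining the exponent bounds above with $c - \epsilon \leq c_k \leq c + \epsilon$ yields
\begin{equation*}
(c-\epsilon) \sum_{k \geq N} e^{-k \gl (u+2\epsilon)} \leq \sum_{k \geq N} c_k e^{-k(\gl u_k + g(\gl))} \leq (c+\epsilon) \sum_{k \geq N} e^{-k \gl (u-2\epsilon)}.
\end{equation*}
Each bound is a geometric series, and the elementary equivalence $\gl \sum_{k \geq 0} e^{-k \gl a} = \gl/(1-e^{-\gl a}) \to 1/a$, valid for any $a > 0$, shows that $\gl$ times the upper (resp.\ lower) bound tends to $(c+\epsilon)/(u-2\epsilon)$ (resp.\ $(c-\epsilon)/(u+2\epsilon)$) as $\gl \searrow 0$. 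Letting $\epsilon \searrow 0$ squeezes the limit to the value $c/u$, which proves the lemma.

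The argument is essentially routine; the only mild subtlety is that $g(\gl)$ is a priori allowed to be negative, so one has to make sure that the effective exponent $\gl u_k + g(\gl)$ remains strictly positive as $\gl$ shrinks. This is precisely what the uniform bound $|g(\gl)| \leq \epsilon \gl$ together with the choice $\epsilon < u/2$ guarantees, and it is the only place where the $o(\gl)$ assumption on $g$ is used in an essential way.
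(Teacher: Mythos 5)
Your proof is correct and follows essentially the same route as the paper: discard finitely many terms, use the convergence of $c_k$, $u_k$ and the bound $|g(\gl)|\leq\gep\gl$ to sandwich the tail between two geometric series, and let $\gep\searrow 0$. No issues.
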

 \begin{proof}[Proof of Lemma \ref{lem1}]
    Note that one can ignore the first $k_1$ terms in the sum appearing in equation \eqref{eq12},
  where $k_1 \geq 0$ is an arbitrarily large fixed integer. Let $\gep \in (0,\min(u,c))$ be fixed. 
 For $k_1$ large enough and $\gl$ small enough (independently from $k_1$), one has :
 \begin{equation}
     (c - \gep) \sum_{k \geq k_1} e^{-k \left(\gl (u+\gep)\right) } \leq  \sum_{k \geq k_1}
 c_k e^{-k \left(\gl u_k+g(\gl)\right)} \leq (c + \gep) \sum_{k \geq k_1} e^{-k \left(\gl (u-\gep) \right)}.
 \end{equation} 
  Therefore 
 \begin{equation}
%    \begin{split}
        \frac{c-\gep}{c}\frac{u}{u+ \gep} \leq \liminf_{\gl \searrow 0} \frac{\gl u}{c} \sum_{k \geq 0} c_k e^{-k (\gl u_k + g(\gl))}  \leq 
         \limsup_{\gl \searrow 0} \frac{\gl u}{c} \sum_{k \geq 0} c_k e^{-k (\gl u_k + g(\gl))} \leq \frac{c+\gep}{c}\frac{u}{u- \gep}
%    \end{split}
 \end{equation} 
    and considering $\gep$ arbitrarily close to zero yields the statement.

 \end{proof}

  Consider now a function $f$ which is continuous and positive on $\cE$, and let $x \in \cE$ be fixed. 
 Making use of the Markov renewal equation and  of the fact that conditionally on the $J_i$'s, the 
 $(\tau_i-\tau_{i-1})_{i \geq 1}$ are independent, we get: 
  \begin{equation}\label{eqrell}
   \int_{\cE} f(y) U_{x,dy}(\gl) = \sum_{n \geq 0}  \int_{\cE} f(y) \int_{\cE^{n-1}}
 \prod_{j=0}^{n-1} \phi_{x_{j},x_{j+1}}(\gl)\bP_{x_{j}}[J_1 = x_{j+1}]
  \end{equation} 
  where $x_{0} := x$ and $x_{n} := y$. For $(u,v) \in \cE^{2}$, we define 
 \begin{equation}
  g_{u,v}(\gl) := 1-\phi_{u,v}(\gl) - \frac{\Gamma(1-\ga) \gl^{\ga}L(1/\gl) \Phi(u,v)}{\ga k(u,v)}.
 \end{equation} 

  Recall that  Proposition \ref{lem2} asserts that $\sup_{(u,v) \in \cE^{2}} |g_{u,v}(\gl)| = o(\gl^{\ga}L(1/\gl) )$ as $\gl \searrow 0$.

For fixed $n$ and fixed $z \in \cE^{2}$, one has:
  \begin{equation}\label{truc2}
  \begin{split}
   &  \int_{\cE^{n-1}} \prod_{j=0}^{n-1} \phi_{x_{j},x_{j+1}}(\gl)  \bP_{x_{j}}[J_1 = x_{j+1}] \\
 &  = \int_{\cE^{n-1}} \exp\left( \sum_{j=0}^{n-1} \log\left( 1-(1-\phi_{x_{j},x_{j+1}}(\gl)) \right) \right)
  \bP_{x}[J_1 = x_{1},  \ldots, J_{n} = x_n ] \\
   \end{split}
 \end{equation} 
  Given $\gep \in (0,1/2)$, there exists $\gl_0$ such that as soon as $\gl \in (0,\gl_0)$, for every $(u,v) \in \cE^{2}$,
  there exists $c_{u,v} (= c_{u,v}(\gl)) \in [1/2-\gep, 1/2+\gep]$  such that the last term above is equal to:
 \begin{equation}
   \begin{split}\label{truc56}
   & \int_{\cE^{n-1}} \exp\left(  \sum_{j=0}^{n-1} (1-\phi_{x_{j},x_{j+1}}(\gl))   + c_{x_j,x_{j+1}}(1-\phi_{x_{j},x_{j+1}}(\gl))^{2}
 \right) \bP[J_1 = x_{1}, \ldots, J_{n} = x_n ] \\
  & = \bE \left[ \exp \left(  \sum_{j=0}^{n-1} - \frac{\Gamma(1-\ga)\gl^{\ga} L(1/\gl)}{\ga }
 \frac{\Phi(J_j,J_{j+1})}{k(J_j,J_{j+1})}  - g_{J_j,J_{j+1}}(\gl)
 + c_{J_j,J_{j+1}}(1-\phi_{J_{j},J_{j+1}}(\gl))^{2} \right)   \ind_{J_n = y} \right]. 
  \end{split} 
 \end{equation} 

  Making use of the ergodic theorem(which holds in our case, since $J$ is 
 a regular Markov chain (see \cite{Fel2}[VIII,7 Theorem 1]), we get that 
  \begin{equation}
    \frac{1}{n} \sum_{j=0}^{n-1} \frac{\Phi(J_j,J_{j+1})}{k(J_j,J_{j+1})} \to
 \bE_{\Pi^{(2)}}\left[\frac{\Phi}{k}\right] = \int_{\cE} \Pi(dx) \Phi(x,y) \mu(dy)
  \end{equation} 
 $\bP_x$ almost surely. 
   When $n$ becomes large, the generic term  appearing in the sum of the right hand side of \eqref{eqrell} finally becomes equivalent to 
 \begin{equation}
  \begin{split}
    & \int_{y \in \cE} f(y) \bP_x \left[ J_n = y \right] \\
   &  \times  \exp \left( - n \left(  \frac{ \Gamma(1-\ga)\gl^{\ga} L(1/\gl)}{\ga }
     \bE_{\Pi^{(2)}} \left[\Phi/k \right] +  \bE_{\Pi^{(2)}} \left[g_{J_0,J_{1}}(\gl)
 + c_{J_0,J_{1}}(1-\phi_{J_{0},J_{1}}(\gl))^{2}\right] \right) \right)
    .  \end{split}
 \end{equation} 

   The function $ \gl \mapsto \bE_{\Pi^{(2)}} \left[g_{J_0,J_{1}}(\gl)
 + c_{J_0,J_{1}}(1-\phi_{J_{0},J_{1}}(\gl))^{2}\right]$ is then $o(\gl^{\ga}L(1/\gl))$. 
 
  On the other hand, making use of the classical ergodic theorem, as $n \to \infty$, the following convergence holds:
 \begin{equation}
  \int_{y \in \cE} f(y) \bP_x \left[ J_n = y \right]  \to \int_{\cE} f(y) \Pi[dy]. 
 \end{equation} 
 Finally,  we can apply Lemma \ref{lem1} to prove 
 Proposition \ref{mainprop}.

\subsection{Proof of Theorem \ref{mP2}.}

 To prove Theorem \ref{mP2}, we will use the following result, which is known as the \textit{extended continuity theorem} (see
 \cite{Fel2}[XIII Theorem 2a]):
 
\begin{theorem}
  Let $U_n$ be a sequence of measures defined on $\mathbb{R}^{+}$, and let $\phi_n$ denote their Laplace transforms. 
  For a fixed $d > 0$, if the sequence
 of functions $\phi_n$ converges pointwise towards a function $\phi$ on $(d,\infty)$, then the sequence of measures 
 $U_n$ converges weakly towards a measure $U$ with Laplace transform $\phi$.
\end{theorem}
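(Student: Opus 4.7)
The approach is to reduce the statement to an application of Helly's selection theorem for finite positive measures, via an exponential change of measure that absorbs the (possibly infinite) mass of the $U_n$'s at infinity. Concretely, for the fixed $d > 0$, I would consider the twisted measures
\begin{equation}
V_n(dt) := e^{-dt}\, U_n(dt),
\end{equation}
whose total masses $V_n([0,\infty)) = \phi_n(d)$ are finite and, by assumption, converge to $\phi(d)$; in particular they form a bounded sequence. The Laplace transform of $V_n$ at a point $\mu > 0$ equals $\phi_n(d+\mu)$, which, by hypothesis, converges to $\phi(d+\mu)$ for every $\mu > 0$.

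With these finite measures in hand, Helly's selection theorem provides a subsequence $(V_{n_k})_k$ converging vaguely on $[0,\infty)$ to some finite positive measure $V$ with $V([0,\infty)) \leq \phi(d)$. Because $t \mapsto e^{-\mu t}$ is continuous and vanishes at infinity for any $\mu > 0$, vague convergence is enough to pass to the limit in the Laplace integrals:
\begin{equation}
\int_{0}^{\infty} e^{-\mu t}\, V_{n_k}(dt) \longrightarrow \int_{0}^{\infty} e^{-\mu t}\, V(dt).
\end{equation}
Combined with the assumed convergence of $\phi_{n_k}$ at $d+\mu$, this identifies $\phi(d+\mu) = \int_0^\infty e^{-\mu t}\, V(dt)$ for every $\mu > 0$. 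Setting $U(dt) := e^{dt}\, V(dt)$ then yields a measure whose Laplace transform agrees with $\phi$ on $(d,\infty)$.

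It remains to upgrade the subsequential convergence to a full convergence. Since the Laplace transform of a $\sigma$-finite measure is analytic on the open right half-plane on which it is finite, and is therefore uniquely determined by its values on any interval of positive length in $(d,\infty)$, the measure $U$ is the unique solution to the moment-type identity above. Hence every vague subsequential limit of $(V_n)$ must coincide with $V$, and consequently the full sequence $(V_n)$ converges vaguely to $V$, equivalently $(U_n)$ converges to $U$ in the sense of convergence at continuity points of the cumulative distribution functions. The main delicacy lies in the fact that the $U_n$'s are a priori not tight at infinity, so no direct Prokhorov compactness argument can be applied to them; the exponential twist is exactly what is needed to circumvent this obstacle, and then the interaction between vague convergence and the decay of $e^{-\mu t}$ at infinity handles the passage to the limit in the Laplace transforms.
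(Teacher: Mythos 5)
The paper does not actually prove this statement: it is quoted verbatim from Feller \cite{Fel2}[XIII, Theorem 2a] as a known black-box result, so there is no in-paper proof to compare yours against. Your argument is the standard textbook proof of the extended continuity theorem (exponential tilting to reduce to uniformly bounded measures, Helly selection, passage to the limit in the Laplace integrals using that $e^{-\mu t}$ vanishes at infinity together with the uniform mass bound, uniqueness of Laplace transforms, and the every-subsequence-has-a-further-subsequence trick), and it is essentially correct.

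One slip worth fixing: the hypothesis only gives pointwise convergence of $\phi_n$ on the \emph{open} interval $(d,\infty)$, so $\phi_n(d)$ need not be finite, let alone convergent, and your tilted measures $V_n(dt)=e^{-dt}U_n(dt)$ may have infinite or non-convergent total mass. The remedy is to tilt at an arbitrary $d'>d$ instead: then $V_n([0,\infty))=\phi_n(d')\to\phi(d')$ gives the uniform mass bound needed both for Helly and for integrating the $C_0$ function $e^{-\mu t}$ against vaguely convergent measures, and you obtain $\phi(d'+\mu)=\int_0^\infty e^{-\mu t}\,V(dt)$ for all $\mu>0$; since $d'>d$ was arbitrary and the resulting $U(dt)=e^{d't}V(dt)$ is independent of $d'$ by the uniqueness theorem, the representation of $\phi$ holds on all of $(d,\infty)$. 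With that adjustment the proof is complete.
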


 We consider a continuous function $f$ on $\cE$ and a fixed $x \in \cE$. Assume first that $f(\cdot)$  is positive; making 
use of Proposition  \ref{mainprop}, for $t \searrow 0$ and fixed $\gl > 0$, we get the convergence:
 
 \begin{equation}\label{Lap}
  \frac{\int_{\cE} U_{x,dy}(\gl t) f(y)}{\int_{\cE} U_{x,dy}(t ) f(y)} \to \frac{1}{\gl^{\ga}}.
 \end{equation} 
 
 Consider now the measure $m_{f}(\cdot)$ on $\mathbb{R}^{+}$ which is defined by
 
 \begin{equation}
  m_{f}([0,n]) :=  \int_{\cE} f(y) U(n,x,dy), 
 \end{equation} 
  and denote by $\Psi_{f}(\cdot)$ its Laplace transform. It is then quite clear that one may rewrite equation \eqref{Lap} as 
 
 \begin{equation}
  \frac{\Psi_{f}( \gl t)}{\Psi_{f}( t)} \to \frac{1}{\gl^{\ga}}
 \end{equation}   
 when $t \searrow 0$. On the other hand, $ \Psi_{f}( \gl t)$ is the Laplace transform associated to the measure $m_{f}(u/t)$, and thus 
 by making use of the extended continuity theorem, this implies the following convergence (for $t \searrow 0$): 
 \begin{equation}\label{FINLA}
  \frac{m_{f}(u/t)}{\Psi_{f}( t)} \to \frac{u^{\ga}}{\Gamma(1+\ga)}.
 \end{equation} 
   Finally, considering the convergence \eqref{FINLA} with $u=1$ along the subsequence $t_n = 1/n$, as $n \to \infty$, one gets:
  \begin{equation}\label{EQUIFI}
   \int_{\cE} f(y) U(n,x,dy) \sim \frac{\int_{\cE} U_{x,dy}(n) f(y)}{\Gamma(1+\ga)}
  \end{equation} 
   and making use of Proposition \ref{mainprop}, this is exactly the statement of Theorem \ref{mP2}. Making use of the linearity of the equivalence 
  \eqref{EQUIFI} and writing $f = f_{+} - (-f_{-})$ where $f_{+}$ (respectively $f_{-}$) is the positive (respectively the negative) 
 part of $f$, we are done.

 \subsection{ Proof of Theorem \ref{main1}. }\label{sec3}

 We finally prove Theorem \ref{main1}. 
 
  The compactness of $\cC_{\infty}$ implies that  every sequence of probability measures on 
 $\cC_{\infty}$ is tight, so that in order to check the convergence in law of a sequence $(\bP_n)$ towards $\bP$, one just 
 has to show finite dimensional convergence, namely that for every $t_1 < t_2 < \ldots < t_n$, the following weak convergence holds:
   \begin{equation}\label{TRUUU}
    \bP_N G_t^{-1} \Rightarrow  \bP_N G_t^{-1}
   \end{equation} 
  where $G_t: \cC_{\infty} \to [0,\infty]^{n}$ is defined by $G_t(F) := (d_{t_1}(F), \ldots,d_{t_n}(F))$.
 
  Let us  recall that for $t > 0$, the law of $d_t$ under $\bP_{\ga}$ is given by (see \cite{Be}[Proposition 3.1])
\begin{equation}
  \bP_{\ga} [d_t \in dy] = \frac{\sin(\ga \pi)}{\pi } \frac{t^{\ga}}{y(y-t)^{\ga}} \ind_{y > t} dy.
\end{equation}

    For conciseness, we show \eqref{TRUUU} in the $n=1$ case when $J$ starts from an initial state $x \in \cE$. Considering an arbitrary $y > t$, we have:
  \begin{equation}
  \begin{split}
      \bP_{N}[J_0 = x,  d_t > y] &  =  \sum_{k \geq 1} \int_{\cE} \bP_x \left[ \tau_{k} \leq Nt, J_k \in dv, \tau_{k+1} > Ny \right]  \\
     & =  \sum_{j \leq [Nt]} \int_{\cE} \left( \sum_{k \geq 1} \bP_x \left[ \tau_{k} = j, J_{k} \in dv \right] \right)
 \bP_{v} \left[ \tau_{1} >  Ny -j \right] \\ 
     & =   \sum_{j \leq [Nt]} \int_{\cE} u(j,x,dv)
 \bP_{v} \left[ \tau_{1} >  Ny-j\right]. \\   
  \end{split}
\end{equation} 
 
 Using Abel's summation, 
 we get: 
   \begin{equation}\label{finau}
\begin{split}
        & \bP_{N}[ J_0 = x, d_t > y]   =  \sum_{j \leq [Nt]} \int_{\cE} \left( U(j,x,dv) - U(j-1,x,dv) \right)
 \bP_{v} \left[ \tau_{1} >  Ny - j \right] = \\
       & \int_{\cE} U(Nt,x,dv)\bP_{v}[  \tau_1 > N (t-y) -1] \\
 &  - \sum_{1 \leq j \leq Nt} \int_{\cE} U(j,x,dv) \bP_{v}[ \tau_1 = Ny - j -1] - \bP_x[\tau_1 > Ny]    
                    \end{split}
   \end{equation}   
 
  Of course, $\bP_x[\tau_1 > Ny] \to 0$ as $N \to \infty$.
 
 As for the first term in the right hand side of equation \eqref{finau}, we get :
\begin{equation}
 \begin{split}
  &  \int_{\cE} U(Nt,x,dv)\bP_{v}[ \tau_1 > N (t-y) -1] \\
& \phantom{iiiiiiiii}  \to \int_{\cE} \frac{\ga t^{\ga}}{\Gamma(1+\ga)\Gamma(1-\ga)} \frac{\Pi(dv)}{\bE_{\Pi^{(2)}}[\Phi/k]}
 \frac{ \int_{\cE} \Phi(v,u) \mu(du)}{\ga (y-t)^{\ga}}  \\
  & \phantom{iiiiiiiii}  = \frac{\sin(\ga \pi)}{ \pi} \left(\frac{t}{y-t}\right)^{\ga}.
 \end{split}
\end{equation} 
 
 For the second term in the right hand side of equation \eqref{finau}, first note that 
 for $\gep \in (0,y)$, making use of the uniform convergence property for slowly varying functions, one has :
  \begin{equation}
  \begin{split}
  &    \sum_{\gep N \leq j \leq Nt} \int_{\cE} U(j,x,dv) \bP_{v}[ \tau_1 = Ny - j -1] \\
 & \phantom{iiiii} \sim \int_{\cE} \sum_{ \gep N \leq j \leq Nt} \frac{\ga j^{\ga} \Pi[dv]}{L(j) \Gamma(1+\ga)
 \Gamma(1-\ga) \bE_{\Pi^{(2)}}[\Phi/k]} 
 \times \frac{L(Ny-j-1) \int_{\cE} \Phi(v,u) \mu(du)}{(Ny-j-1)^{\ga+1} } \\
  & \phantom{iiiiiiiiiiiiiii} \sim \frac{\ga}{\Gamma(1-\ga) \Gamma(1+\ga)} \int_{\gep}^{t} \frac{u^{\ga}}{(y-u)^{\ga +1}} du. 
  \end{split} 
  \end{equation} 
 
 On the other hand, we get easily that 
 
 \begin{equation}
  \lim_{\gep \searrow 0} \lim_{N \to \infty}  \sum_{j \leq \gep N  } \int_{\cE} U(j,x,dv) \bP_{v}[ \tau_1 = Ny - j -1] = 0. 
 \end{equation} 
    
 Thus we are left with checking the equality:
 \begin{equation}
    \frac{\sin(\ga \pi)}{\pi} \int_{y}^{\infty}  \frac{t^{\ga}}{u(u-t)^{\ga}} du = 
\frac{\sin(\ga \pi)}{ \pi} \left(\frac{t}{y-t}\right)^{\ga} -
 \frac{\ga \sin(\ga \pi)}{ \pi} \int_{0}^{t} \frac{u^{\ga}}{(y-u)^{\ga +1}} du 
 \end{equation}  
   which is easy using integration by parts; just note that 
  
 \begin{equation}
    \int_{0}^{t} \frac{u^{\ga}}{(y-u)^{\ga +1}} du = 
\frac{1}{\ga} \left[ \left(\frac{u}{y-u}\right)^{\ga} \right]_{0}^{t} - \int_{0}^{t} \frac{u^{\ga -1}}{(y-u)^{\ga}} du
 \end{equation} 
 and finally the equality 
 \begin{equation}
   \int_{0}^{t} \frac{u^{\ga -1}}{(y-u)^{\ga}} du = \int_{y}^{\infty}  \frac{t^{\ga}}{u(u-t)^{\ga}} du
 \end{equation} 
  is obvious.

    \section{Application to the strip wetting model}\label{sec4}

 \subsection{ Asymptotics about the kernel of the free process.}\label{FTES}

  The following transition kernel will be of basic importance for the proof of Theorem \ref{main2}:  
  \begin{equation} 
 \begin{split}
              & F_{x,dy}(n) := \bP_{x} [S_1 > a, S_2 > a, \ldots, S_{n-1} > a, S_n \in dy] \ind_{ x,y \in {[0,a]}^{2}} \hspace{.2 cm} \text{if} \hspace{.2 cm}  n \geq 2, \\
              &  F_{x,dy}(1) := h(y-x) \ind_{ x,y \in [0,a]} dy.
                   \end{split}
  \end{equation}
 For all  $n \in \N$ and $x \in [0,a]$, the kernel $F_{x,dy}(n) $ has a density $f_{x,y}(n)$
 with respect to the Lebesgue measure restricted to $[0,a]$.
  
 We define the sequence of entry times of $S$ into the strip by $\tau_0 := 0$
 and $\tau_{n+1} := \inf \{ k > \tau_{n}, S_{k} \in [0,a] \}$.
   We also consider the process $(J_n)_{n \geq 0}$ where  $J_n := S_{\tau_n}$; the
 process $\tau$ is then a  Markov
 renewal process whose modulating chain is the Markov chain $J$, and with associated kernel $F_{\cdot,\cdot}(\cdot)$.

  Note that the kernel $F$ is defective, in the sense that
 $\int_{[0,a]}\sum_{k \geq 1}F_{x,dy}(k) = \bP_{x}[S_1 \geq 0] \leq \bP[S_1 \geq -a] < 1$ for every $x \in [0,a]$. 
 In particular, for every $x \in [0,a]$, the quantity $\bP_{x}[\tau_1 = \infty] := 1 - \int_{[0,a]}\sum_{k \geq 1}F_{x,dy}(k)$ is
 strictly positive.

 We denote by $l_N$ the cardinality of $\{ k \leq N | S_k \in [0,a] \}$, and we define
  $\overline{F}_{x}(k) := \int_{[0,a]} \sum_{j > k} F_{x,dy}(j) $. 
With these notations, we write the joint law of $( l_N, (\tau_n)_{  n \leq l_N}, (J_n)_{n \leq l_N})$
 under $\bP_{N,a,\gb}$ under the following form: 
  \begin{equation}\label{HPP}
 \begin{split}
   &  \bP_{N,a,\gb} [ l_N = k,\tau_j = t_j, J_j \in dy_i, i=1 ,\ldots, k ] \\
& \phantom{x}  = \frac{e^{\gb k}}{Z_{N,a,\gb}}  F_{0,dy_1}(t_1)F_{y_1,dy_2}(t_2-t_1)
 \ldots F_{y_{k-1},dy_k}(t_k-t_{k-1}) \left( \overline{F}_{y_{k}}(N-t_{k}) + \bP_{y_{k}}[\tau_1 = \infty] \right)
 \end{split}
    \end{equation} 
 where $k \in \N,  0 < t_1 < \ldots < t_k \leq N$ and $ (y_i)_{i=1, \ldots,k} \in \bbR^k$.

  To describe the asymptotic behavior of the kernel $F$, we will need some results issued from fluctuation theory for random walks.
 Let us collect some basic facts about this topic.

For $n$ an integer, we denote by $T_n$ the {\sl $n$th ladder epoch}; that is $T_0 := 0$
 and, for $n \geq 1$, $T_n := \inf\{ k \geq T_{n-1}, S_k > S_{T_{n-1}} \}$. 
We also introduce the so-called {\sl ascending ladder heights} $ (H_n)_{n \geq 0}$,
 which, for $ k \geq 1$, are given by $H_k := S_{T_k}$. Note that the process $(T,H)$ is 
a bivariate renewal process on $(\bbR^+)^2$. 
  In a similar way, one may define the strict descending ladder variables process $(T^-,H^-)$ as 
  $(T^-_0,H^-_0) := (0,0)$ and 
   \begin{equation}
    T^-_n := \inf\{ k \geq T_{n-1}, S_k < S_{T_{n-1}} \} \hspace{.6 cm} \text{and} \hspace{.6 cm} H^-_k := -S_{t_k^-}. 
   \end{equation}
 We define the following function: 
   \begin{equation}\label{DefPh}
   \Phi_a(x,y) :=  \frac{\bP [H_1^- \geq a-y]\bP [H_1 \geq a-x]}{\gs \sqrt{2\pi} } \hspace{ .2 cm}
  \ind_{ x,y \in [0,a]}.
   \end{equation}
   
 In the appendix, we show the following theorem, which will be the cornerstone of our approach: 
  \begin{theorem} \label{Pr} The following equivalence holds uniformly on $(x,y) \in [0,a]^2$: 
    \begin{equation}\label{EQC}
    n^{3/2}f_{x,y}(n) \sim \Phi_a(x,y).
    \end{equation}
      \end{theorem}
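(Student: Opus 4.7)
The plan is to establish \eqref{EQC} by combining the Markov property at the first and last steps of the excursion above $a$ with a uniform local limit theorem for the random walk killed on entering $(-\infty,0]$; the latter is the content of the results of Doney \cite{Do3} that are developed in Section \ref{sec7}.

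\textbf{Step 1 (Two-sided conditioning).} On the event defining $F_{x,dy}(n)$, both $S_1$ and $S_{n-1}$ must lie strictly above $a$. Conditioning on these values via the Markov property at times $1$ and $n-1$, and shifting the walk down by $a$, one obtains for $n\geq 3$ the representation
\[
f_{x,y}(n) = \int_a^\infty \int_a^\infty h(u-x)\, p_{u-a,\,v-a}(n-2)\, h(y-v)\, du\, dv,
\]
where $p_{u',v'}(m)$ denotes the transition density at $v'>0$ of the centered walk started at $u'>0$ and killed upon entering $(-\infty,0]$.

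\textbf{Step 2 (Uniform LLT and integral identities).} The essential analytic input is the uniform local limit theorem
\[
m^{3/2}\, p_{u',v'}(m) \longrightarrow \frac{V(u')\, V^{*}(v')}{\gs\sqrt{2\pi}},
\]
as $m\to\infty$, locally uniformly in $(u',v')\in(0,\infty)^{2}$; here $V$ and $V^{*}$ denote the renewal functions of the strictly descending and strictly ascending ladder-height processes of $S$ respectively. Together with tail bounds in $(u',v')$ strong enough to justify dominated convergence in the outer double integral, this is what Doney \cite{Do3} provides. Substituting into Step 1 and passing the limit under the integral yields
\[
n^{3/2} f_{x,y}(n) \longrightarrow \frac{1}{\gs\sqrt{2\pi}}\left(\int_{a}^{\infty} h(u-x)\, V(u-a)\, du\right)\left(\int_{a}^{\infty} h(y-v)\, V^{*}(v-a)\, dv\right).
\]
The proof then closes via the classical Wiener--Hopf identities
\[
\int_{a}^{\infty} h(u-x)\, V(u-a)\, du = \bP[H_{1}\geq a-x], \qquad \int_{a}^{\infty} h(y-v)\, V^{*}(v-a)\, dv = \bP[H_{1}^{-}\geq a-y],
\]
each of which expresses the tail of a first strict ladder height as the convolution of the step density with the renewal measure of the opposite ladder-height process.

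\textbf{Main difficulty.} The hard part is the uniformity of convergence in $(x,y)\in[0,a]^{2}$. Pointwise convergence of $m^{3/2}p_{u',v'}(m)$ at fixed $(u',v')$ is a classical first-passage result, but interchanging the limit with the double integral while retaining uniformity in the boundary data $(x,y)$ requires quantitative bounds on both the local behaviour of $p_{u',v'}(m)$ up to the boundary $\{u'=0\}\cup\{v'=0\}$ and on its tails for large $(u',v')$. Producing such bounds in the form needed here is precisely the technical heart of Section \ref{sec7}, and will rely on Doney's recent analysis in \cite{Do3}.
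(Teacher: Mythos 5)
Your proposal follows essentially the same route as the paper: the same decomposition over the first and last points of the excursion above level $a$, the same key input (Doney's uniform estimate for the walk conditioned to stay positive, stated in the appendix as Theorem \ref{Dooo}), and the same duality identities converting the renewal-function integrals into $\bP[H_1 \geq a-x]$ and $\bP[H_1^- \geq a-y]$. The only differences are cosmetic — the paper phrases the middle factor via a time-reversed walk and Riemann sums over small intervals rather than a density-level LLT, and handles the region $u \vee v \geq \gep\sqrt{n}$ explicitly via Stone's local limit theorem and the second moment of $X$ rather than an appeal to dominated convergence — and you correctly identify the uniformity/tail control as the technical heart of the argument.
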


%  It is well known (see \cite{Fel2})  that the following relation holds
%  \begin{equation}
%   \sigma^2 = 2 \bE[H_1]\bE[H_1^-].
% \end{equation}
%   In particular, it follows that both $\bE[H_1]$ and $\bE[H_1^-]$ are finite.
 It is a well known fact
 that the continuity of $h(\cdot)$ implies the continuity of the distribution function of $H_1$ as one can deduce from 
 the identity $ \bP[H_1 \in I] = \sum_{k \geq 1} \bP[T_1 = k, S_k \in I]$ which is valid for every interval $I$. Moreover, 
 as we assumed $n_o = 1$ in equation \eqref{hypn}, $\Phi_{a}$ is continuous and strictly positive on the whole square 
 $[0,a]^{2}$, as one has the equivalence
  \begin{equation}
   \bP[ H_1 > a] > 0 \Longleftrightarrow \bP[ S_1 > a] > 0 
  \end{equation} 
 and of course a similar statement holds for the first descending ladder height process.

  \subsection{Defining the free energy through an infinite dimensional problem.}\label{sec6} In this section,
 we define the free energy in a
  way that allows us to make use of the underlying Markov renewal structure inherent to this model.

  For $\gl  \geq 0$, we introduce the following kernel:
\begin{equation}
 B_{x,dy}^{\gl} := \sum_{n=1}^{\infty} e^{-\gl n} F_{x,dy}(n)
\end{equation}
 and the associated integral operator 
 \begin{equation}
   (B^{\gl}h)(x) := \int_{[0,a]} B_{x,dy}^{\gl} h(y). 
 \end{equation} 
 We then have the 
 \begin{lem} \label{Comp}
  For $\gl \geq 0, B_{x,dy}^{\gl}$ is a compact operator on the Hilbert space $L^2([0,a])$. 
 \end{lem}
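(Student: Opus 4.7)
The plan is to show that $B^{\gl}$ is Hilbert--Schmidt on $L^{2}([0,a])$, which immediately implies compactness. Since $B^{\gl}$ acts by integration against the density kernel
\[
 b^{\gl}(x,y) \ := \ \sum_{n=1}^{\infty} e^{-\gl n} f_{x,y}(n),
\]
it suffices to establish that $b^{\gl} \in L^{2}([0,a]^{2})$, and for this it is enough to prove that $b^{\gl}$ is bounded on $[0,a]^{2}$, since the square has finite Lebesgue measure.

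First, I would isolate the $n=1$ term, for which $f_{x,y}(1)=h(y-x)$ is bounded by $\|h\|_{\infty}<\infty$ by the continuity and boundedness assumption on $h$. Next, for $n\ge 2$, I would invoke Theorem \ref{Pr}: the asymptotic
\[
 n^{3/2} f_{x,y}(n) \ \longrightarrow \ \Phi_{a}(x,y)
\]
holds uniformly on $[0,a]^{2}$, and $\Phi_{a}$ is bounded there (it is continuous on a compact square by the remarks following Theorem \ref{Pr}). Combining uniformity with boundedness of $\Phi_{a}$ gives a constant $C>0$ such that
\[
 f_{x,y}(n) \ \leq \ C\, n^{-3/2} \qquad \text{for all } n\ge 2, \ (x,y)\in[0,a]^{2}.
\]

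Summing these bounds, since $e^{-\gl n}\le 1$ for $\gl\ge 0$,
\[
 b^{\gl}(x,y) \ \leq \ \|h\|_{\infty} + C \sum_{n \ge 2} n^{-3/2} \ =: \ M \ < \ \infty,
\]
uniformly in $(x,y)\in[0,a]^{2}$. In particular,
\[
 \int_{[0,a]^{2}} |b^{\gl}(x,y)|^{2}\,dx\,dy \ \leq \ a^{2} M^{2} \ < \ \infty,
\]
so $B^{\gl}$ is Hilbert--Schmidt, hence compact on $L^{2}([0,a])$.

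There is no real obstacle here: the only substantive input is the uniform tail asymptotic of Theorem \ref{Pr}, which controls the densities $f_{x,y}(n)$ for $n\ge 2$, while the $n=1$ term is handled by the assumptions on $h$. The argument works uniformly for all $\gl\ge 0$, so the same proof covers the (most delicate) case $\gl=0$ where no exponential tilting helps.
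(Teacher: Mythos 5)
Your proof is correct and follows essentially the same route as the paper: both reduce compactness to the Hilbert--Schmidt property and deduce it from the uniform bound $f_{x,y}(n)\leq c\,n^{-3/2}$ supplied by Theorem \ref{Pr} (your version merely spells out the $n=1$ term and the boundedness of $b^{\gl}$ in more detail). The only cosmetic caveat is that the uniform equivalence in Theorem \ref{Pr} directly yields the bound only for $n$ large; for the finitely many remaining $n$ one should note that $f_{x,y}(n)\leq \|h\|_{\infty}$, exactly as you do for $n=1$.
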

 \begin{proof}[Proof]
   Of course $B_{x,dy}^{\gl}$ has a density with respect to the Lebesgue measure restricted to $[0,a]$ that we denote by $b^{\gl}(x,y)$. To show Lemma \ref{Comp},
  it is sufficient to show that $B_{x,dy}^{\gl}$ is actually Hilbert-Schmidt, that is that  
  \begin{equation} \label{L1}
 \int {b^{\gl}(x,y)}^2 \ind_{x,y \in [0,a]} dx dy < \infty.
 \end{equation}
  
 Note that Theorem \ref{Pr} entails in particular that 
 there exists $c > 0$ such 
that, for all $n \in \N$ and $x,y \in [0,a], f_{x,y}(n) \leq  \frac{c}{n^{3/2}}$.

 Making use of this inequality, it is then straightforward to show \eqref{L1}.
\end{proof}
    
  Lemma \ref{Comp} enables us to introduce $\gd^a(\gl)$, the spectral radius of the operator $B^{\gl}$. 
 % citer Nummelin
It is easy to check that $\gd^a(\gl) \in (0,\infty)$ for $\gl \geq 0$; $\gd^a(\gl)$ is an isolated
  and simple eigenvalue of  $B_{x,dy}^{\gl}$ (see Theorem 1 in \cite{Ze}). The function $\gd^a(\cdot)$ is non-increasing,
 continuous on $[0,\infty)$ and analytic on $(0,\infty)$ because the operator $B_{x,dy}^{\gl}$ has these properties. The analyticity
 and the fact that $\gd^a(\cdot)$ is not constant (as $\gd^a(\gl) \to 0$ as $\gl \to \infty$) force $\gd^a(\cdot)$ 
to be strictly decreasing.
  
  We denote by $(\gd^a)^{-1}(\cdot)$ its
  inverse function, defined on $(0,\gd^a(0)]$. We now define $\gb_c^a$ and $F^a(\gb)$ by:
  \begin{equation} \label{DefFEE}
   \gb_c^a := -\log(\gd^a(0)), \hspace{.2 cm} F^a(\gb) := ( \gd^a)^{-1}(\exp(-\gb) ) \hspace{.2cm} \text{if} \hspace{.2cm} \gb \geq \gb_c^a  \hspace{.2cm} \text{and} \hspace{.2cm}  0 \hspace{.2cm} \text{otherwise.}
\end{equation}
 Note that this definition entails in particular the analyticity of $F(\cdot)$ on $\bbR \setminus \{ \gb_c^{a} \}$. We discuss below 
 the relevance of this definition in accordance with the definition given in equation \eqref{DFR}.

  The fact that $b^{F^a(\gb)}(x,y) > 0$ for every $(x,y) \in [0,a]$ implies the uniqueness (up to a multiplication 
 by a positive constant) and the positivity almost everywhere of 
the so-called right and left Perron Frobenius eigenfunctions of $B_{x,dy}^{F^a(\gb)}$ ;
 more precisely, Theorem 1 in \cite{Ze} implies that there exist two functions $v_{\gb}(\cdot)$ and $w_{\gb}(\cdot)$ in $L^2([0,a])$ 
such that $v_{\gb}(x)> 0$ and $w_{\gb}(x)> 0$ for almost every $x \in [0,a]$, and moreover:  
    \begin{align}
    & \int_{y \in [0,a]} B_{x,dy}^{F^a(\gb)} v_{\gb}(y) = \left( \frac{1}{e^{\gb}} \wedge  \frac{1}{e^{\gb_c^a}} \right) v_{\gb}(x) \\
    & \int_{x \in [0,a]} w_{\gb}(x) B_{x,dy}^{F^a(\gb)} dx = \left( \frac{1}{e^{\gb}} \wedge  \frac{1}{e^{\gb_c^a}} \right) w_{\gb}(y) dy. 
 \end{align}
  Spelling out these equalities, we get that 
 \begin{equation}
  v_{\gb}(x) =  \frac{1}{\frac{1}{e^{\gb}} \wedge  \frac{1}{e^{\gb_c^a}}}
 \sum_{ n \geq 0} e^{-F^a(\gep)n} \int_{y \in [0,a]} f_{x,y}(n) v_{\gb}(y) dy,
 \end{equation} 
    which implies in particular the fact that $ v_{\gb}(\cdot)$ is positive and continuous on the
 whole $[0,a]$ and not only almost everywhere. Similarly,
 the function $ w_{\gb}(\cdot)$ is everywhere positive and continuous. 
  These considerations lead us to define the new kernel 
 \begin{equation}
  K_{x,dy}^{\gb}(n) := e^{\gb} F_{x,dy}(n) e^{-F^a(\gb)n}  \frac{v_{\gb}(y)}{v_{\gb}(x)}. 
\end{equation}

     It is then straightforward to check that 
  \begin{equation} \label{INVMP}
  \begin{split}
   & \int_{y \in \bbR} \sum_{n \in \N} K_{x,dy}^{\gb}(n) = \frac{e^{\gb}}{v_{\gb}(x)} \int_{y \in \bbR} \sum_{n \in \N} F_{x,dy}(n) e^{-F^a(\gb)n}v_{\gb}(y) \ind_{y \in [0,a]}  \\ 
   &  = \frac{e^{\gb}}{v_{\gb}(x)} \int_{y \in \bbR} B_{x,dy}^{F^a(\gb)} v_{\gb}(y) = 1 \wedge \frac{e^{\gb}}{e^{\gb_c^a}}. 
   \end{split}
   \end{equation}

   In particular, when $\gb = \gb_c^{a}$, $K_{\cdot,\cdot}^{\gb^{a}_{c}}$ is a non defective Markov renewal kernel, which satisfies 
 Assumption \ref{as1}. Indeed, when $n  \to \infty$, it is a consequence of Theorem \ref{Pr} that the following convergence holds uniformly 
 on $[0,a]^{2}$:
   \begin{equation}
    n^{3/2} \frac{K_{x,dy}^{\gb^{a}_{c}}(n)}{dy} \sim e^{\gb_{c}^{a}} \frac{ v_{\gb}(y)}{v_{\gb}(x)} \Phi_a(x,y),
   \end{equation} 
  which is equation \eqref{BASEQ} with $\ga = 1/2$ and $L(\cdot)$ a trivial slowly varying function. Let us define 
 $\tilde{\Phi}_{a}(x,y) := e^{\gb_{c}^{a}} \frac{ v_{\gb}(y)}{v_{\gb}(x)} \Phi_a(x,y)$. Note also that the strict positivity 
 of  $\tilde{\Phi}_{a}$ on the square $[0,a]^{2}$ implies that the condition about the strict positivity of the
 transition kernel of $J$ is also satisfied. 
  
  It is not hard to see that for every $\gb \in \bbR$,
  the measure $\Pi_{\gb}(dx) := v_{\gb}(x) w_{\gb}(x) dx$ is invariant for the Markov chain $J$; moreover, both $v_{\gb}(\cdot)$
  and $w_{\gb}(\cdot)$ being
 defined up to a multiplicative constant, one can use this degree of freedom to make $\Pi_{\gb}$ a probability measure on $[0,a]$. 
 
   Let us point out that it is not clear \textit{a priori} that the quantity we define in \eqref{DefFEE}
 actually coincides with the  definition given in equation \eqref{DFR}. We will not delve into this issue in  this paper; however, 
 we stress that this identification relies on applying the Markov renewal theorem to the kernel $K^{\gb}$ 
 in the localized phase (thus when the Markov renewal associated to $K^{\gb}$ is recurrent positive),
  and on direct (polynomial) asymptotics on the partition functions in the delocalized one (for more details,
 see the thesis \cite{JS}[Chapter 2]). For all 
 relevant purposes, we will use the definition given in equation \eqref{DefFEE} as a definition for the free energy. 

 \subsection{Proof of Theorem \ref{main2}.}

 We will need the asymptotic behavior of $Z_{N,a,\gb_c^{a}}(x)$, which is given in the next lemma:
 
  \begin{lem}\label{estZ} As $N \to \infty$ and for $x \in [0,a]$, the following equivalence holds:
  \begin{equation}
 \begin{split}
    Z_{N,a,\gb_c^{a}}(x) & \sim  N^{1/2} \frac{v_{\gb^{a}_c}(x) (1 - e^{-\gb_c^{a}})}
{ \pi e^{\gb_c^{a}}\int_{[0,a]^{2}} v_{\gb^{a}_c}(t) w_{\gb^{a}_c}(s) \Phi_a(s,t) ds dt}
 \int_{[0,a]} w_{\gb_c^{a}}(s) ds \\
   & \sim  N^{1/2} v_{\gb^{a}_c}(x) C_{a,\Phi}.
 \end{split}  
  \end{equation} 
  \end{lem}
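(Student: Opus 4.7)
The plan is to reduce this asymptotic to an application of Theorem \ref{mP2} via the change of measure to the non-defective Markov renewal kernel $K^{\gb_c^a}$. At criticality $F^a(\gb_c^a) = 0$, so one has the pointwise identity
\begin{equation}
 F_{x,dy}(n) = e^{-\gb_c^a} K^{\gb_c^a}_{x,dy}(n) \frac{v_{\gb_c^a}(x)}{v_{\gb_c^a}(y)},
\end{equation}
and $K^{\gb_c^a}$ satisfies Assumption \ref{as1} with $\ga = 1/2$, trivial slowly varying function, and limit kernel $\tilde{\Phi}_a$, as already noted in the text preceding the lemma.

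Starting from the representation \eqref{HPP}, I would insert the identity above into the product $e^{\gb_c^a k} F_{x,dy_1}(t_1) \cdots F_{y_{k-1},dy_k}(t_k-t_{k-1})$, telescope the factors $v_{\gb_c^a}(y_j)/v_{\gb_c^a}(y_{j+1})$, and recognize the sum over $k$ and positions $t_1 < \ldots < t_k$ as the Markov renewal mass of $K^{\gb_c^a}$. This yields the clean representation
\begin{equation}
 Z_{N,a,\gb_c^a}(x) = v_{\gb_c^a}(x) \sum_{n=0}^{N} \int_{[0,a]} \frac{u^K(n,x,dy)}{v_{\gb_c^a}(y)} \bP_y[\tau_1 > N-n],
\end{equation}
where $u^K(n,x,dy)$ is the renewal mass at step $n$ for $K^{\gb_c^a}$ (with $u^K(0,x,dy) = \gd_x(dy)$) and $\bP_y[\tau_1 > m] = \overline{F}_y(m) + \bP_y[\tau_1 = \infty]$.

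The next step is to split this sum according to the decomposition $\bP_y[\tau_1 > N-n] = \bP_y[\tau_1 = \infty] + \overline{F}_y(N-n)$. The first piece produces $v_{\gb_c^a}(x) \int U^K(N,x,dy) \bP_y[\tau_1 = \infty]/v_{\gb_c^a}(y)$; the integrand is continuous on $[0,a]$ (the continuity of $\bP_\cdot[\tau_1 = \infty]$ follows from continuity of $h$ and the uniform bound $f_{y,u}(n) \leq C/n^{3/2}$ from Theorem \ref{Pr}), and Theorem \ref{mP2} gives the exact $\sqrt{N}$ asymptotics with an explicit constant. To identify this constant with $C_{a,\Phi}$, I would use $\Pi_{\gb_c^a}(dy) = v_{\gb_c^a}(y) w_{\gb_c^a}(y) dy$ (normalized to total mass one), compute $\bE_{\Pi^{(2)}_{\gb_c^a}}[\tilde\Phi_a/k] = e^{\gb_c^a} \int_{[0,a]^2} w_{\gb_c^a}(u) v_{\gb_c^a}(v) \Phi_a(u,v) du\, dv$ via \eqref{Inv}, and then apply the identity $\int w_{\gb_c^a}(y) \bP_y[\tau_1 < \infty] dy = e^{-\gb_c^a} \int w_{\gb_c^a}(y) dy$ (which follows from the invariance of $\Pi_{\gb_c^a}$ under the $K$-transition kernel of $J$).

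The only real obstacle is controlling the remainder
\begin{equation}
 B_N(x) := \sum_{n=0}^N \int \frac{u^K(n,x,dy)}{v_{\gb_c^a}(y)} \overline{F}_y(N-n),
\end{equation}
and proving $B_N(x) = o(\sqrt{N})$, because Doney-type local asymptotics for $u^K$ are not available in the continuous-state setting (as emphasized by the authors). My plan is to bypass this using only the integrated Theorem \ref{mP2}: fix $M$, use the uniform bound $\overline{F}_y(m) \leq C/\sqrt{m}$ from Theorem \ref{Pr}, and estimate
\begin{equation}
 B_N(x) \leq \frac{C}{\sqrt{M}} \int \frac{U^K(N-M,x,dy)}{v_{\gb_c^a}(y)} + \int \frac{U^K(N,x,dy) - U^K(N-M,x,dy)}{v_{\gb_c^a}(y)}.
\end{equation}
Applying Theorem \ref{mP2} to the continuous test function $1/v_{\gb_c^a}$, the first term is $O(\sqrt{N}/\sqrt{M})$ and the second is $O(M/\sqrt{N})$; dividing by $\sqrt{N}$, taking $\limsup_N$ then $M \to \infty$, gives $B_N(x)/\sqrt{N} \to 0$, completing the proof.
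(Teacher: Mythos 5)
Your proposal is correct and follows essentially the same route as the paper: the same change of measure to the non-defective kernel $K^{\gb_c^a}$, the same decomposition over the last renewal epoch into a $\bP_y[\tau_1=\infty]$ main term (resolved by Theorem \ref{mP2}) plus an $\overline{F}$ remainder, and the same identification of the constant via the left-eigenfunction identity. The only divergence is in the remainder: the paper kills it more directly by noting that, $K^{\gb_c^a}$ being non-defective, $\sum_{n\leq N}\int \gk(n,x,dy)\,\overline{F}_y(N-n)$ is bounded by a constant, whereas your two-scale splitting yields $o(\sqrt{N})$ (note your intermediate claim that the boundary block is $O(M/\sqrt{N})$ does not follow from the integrated Theorem \ref{mP2} alone, but the weaker $o(\sqrt{N})$ bound for fixed $M$ that it does give is all your limiting procedure needs).
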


 \begin{proof}[Proof of Lemma \ref{estZ}]

 For $(x,y,n) \in [0,a]^{2} \times \mathbb{N}$, we denote by $\gk$ the Markov renewal mass function
 associated to the kernel $K_{\cdot,\cdot}^{\gb^{a}_{c}}$, that is
 $\gk(n,x,dy) := \sum_{k \geq 1} (K^{\gb^{a}_{c}})_{x,dy}^{\ast k} (n)$.
 
  Summing over the last contact point of the process with the strip before time $N$, we have:
  \begin{equation}\label{bound}
 \begin{split}
       Z_{N,a,\gb_c^{a}}(x) & = v_{\gb^{a}_c}(x) \int_{[0,a]} \sum_{n=0}^{N} \gk(n,x,dy) \frac{1}{v_{\gb^{a}_c}(y)} \bP_{y}[ \tau_1 > N-n] \\  
 & +   v_{\gb^{a}_c}(x) \int_{[0,a]} \sum_{n=0}^{N} \gk(n,x,dy) \frac{1}{v_{\gb^{a}_c}(y)} \bP_{y}[ \tau_1 = \infty] \\
        & = v_{\gb^{a}_c}(x) \int_{[0,a]} \sum_{n=0}^{N} \gk(n,x,dy)   \sum_{j > N-n} \int_{[0,a]}
 \frac{K^{\gb_c^{a}}_{y,du}(j)}{v_{\gb_c^{a}}(u)} \\
    & +   v_{\gb^{a}_c}(x) \int_{[0,a]} \sum_{n=0}^{N} \gk(n,x,dy) \frac{1}{v_{\gb^{a}_c}(y)} \bP_{y}[ \tau_1 = \infty]. \\
                   \end{split}
  \end{equation}   
 
 Taking into account the fact that $v_{\gb_{c}^{a}}(\cdot)$ is continuous and positive on $[0,a]$, and that $K_{\cdot,\cdot}^{\gb^{a}_{c}}$
 is non defective (and thus that
 $\int_{[0,a]} \sum_{n=0}^{N} \gk(n,x,dy) \sum_{j > N-n} \int_{[0,a]} K^{\gb_c^{a}}_{y,du}(j) = \bP_{x}[S_1 > 0] \leq 1 $), 
 the first term in the right hand side of equation \eqref{bound} is bounded as $N$ becomes large. As for the second term, denoting by 
 $k_{\gb_c^{a}}(s,t) := \frac{1}{dt}\sum_{n \geq 1} K^{\gb_c^{a}}_{s,dt}(n)$, we make use 
 of Proposition \ref{mP2} in the case where $\cE = [0,a]$ is equipped with the  Lebesgue measure  to get the equivalence: 
  \begin{equation}
   \int_{[0,a]} \sum_{n=0}^{N} \gk(n,x,dy)  \frac{\bP_{y}[\tau_1 = \infty]}{ v_{\gb_c^{a}}(y)} \sim 
    \frac{N^{1/2} }{\pi} \int_{[0,a]} \frac{\Pi_{\gb_c^{a}}(dy)}{\bE_{\Pi^{(2)}}[\tilde{\Phi}_{\gb_c^{a}}/k_{\gb_c^{a}}]} \frac{\bP_{y}[\tau_1 = \infty]}{v_{\gb_c^{a}}(y)}.
  \end{equation} 
  Then we recall that $\Pi_{\gb_c^{a}}(du) = v_{\gb_c^{a}}(u) w_{\gb_c^{a}}(u) du $ and we make use of the equality
 \begin{equation}
  \int_{[0,a]} w_{\gb_c^{a}}(s)  \left( 1 - \sum_{j \geq 1} \int_{[0,a]} F_{s,dt}(j) \right) ds = 
\left(1 - e^{-\gb_c^{a}}\right) \int_{[0,a]} w_{\gb_c^{a}}(t) dt.
 \end{equation}
 Finally making use of equality \eqref{Inv} we get Lemma \ref{estZ}.
  
 \end{proof}

 We will also need the following deterministic lemma:

 \begin{lem}\label{DCG}
  Let $v_n$, $u_n$ and $w_n$ be positive sequences such that $v_n \sim w_n \sim \sqrt{n}$ and $u_n \sim n^{-3/2}$ as $n \to \infty$. For any 
 $(s,t) \in (0,1)^{2}$ such that $s < t$, the following convergence holds as $N \to \infty$:
 \begin{equation}\label{LFLF}
  \frac{1}{v_N} \sum_{i=1}^{sN} w_i \sum_{j=tN}^{N} (u_{j-i}-u_{j-i-1})v_{N-j} \to -\frac{3}{2} \int_{(u,v) \in [0,s] \times [t,1]} 
   \frac{\sqrt{u(1-v)}}{(v-u)^{5/2}} du dv.
 \end{equation}    
 \end{lem}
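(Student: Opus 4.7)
The plan is to interpret the double sum on the left as a Riemann sum converging to the double integral on the right. I would introduce rescaled variables $u = i/N$ and $v = j/N$, so that the outer index spans a lattice in $[0,s]$ and the inner one a lattice in $[t,1]$, each with mesh $1/N$.

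After substituting the asymptotics $v_N \sim \sqrt{N}$, $w_i \sim \sqrt{N}\sqrt{u}$, $v_{N-j} \sim \sqrt{N}\sqrt{1-v}$, and (crucially) $u_{j-i} - u_{j-i-1} \sim -\frac{3}{2} N^{-5/2}(v-u)^{-5/2}$, each summand contributes at leading order
\[
-\frac{3}{2} N^{-2}\, \frac{\sqrt{u(1-v)}}{(v-u)^{5/2}}.
\]
Summing over the roughly $N^2$ lattice points with spacing $1/N$ in each coordinate recognizes the whole expression as a Riemann sum for the integral in the statement, with the prefactor $-\frac{3}{2}$ coming from the discrete derivative of $n \mapsto n^{-3/2}$.

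To make the limit rigorous I would split the outer/inner sums into an interior part where $i/N \geq \epsilon$ and $1 - j/N \geq \epsilon$, and a boundary remainder. On the interior, uniform asymptotic equivalences for $w_i$, $v_{N-j}$ and for the discrete derivative of $u_n$ reduce the sum to a Riemann sum for a continuous, bounded integrand on a compact rectangle: since $v - u \geq t - s > 0$ throughout, one stays strictly away from the singularity of $(v-u)^{-5/2}$, so standard Riemann sum convergence applies. The boundary remainder contributes only $O(\epsilon^{3/2})$ thanks to boundedness of the integrand on $[0,s] \times [t,1]$, and is sent to zero afterwards by letting $\epsilon \to 0$.

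The main obstacle is the asymptotic for the discrete derivative $u_{j-i} - u_{j-i-1}$: the bare hypothesis $u_n \sim n^{-3/2}$ does not by itself imply $u_n - u_{n-1} \sim -\frac{3}{2} n^{-5/2}$ without additional regularity on $u_n$. In the intended application, $u_n$ arises from a Markov renewal mass density whose precise uniform asymptotic (via Theorem \ref{Pr} and the slowly varying framework of Assumption \ref{as1}) supplies exactly the required derivative-type estimate. An alternative is to perform Abel summation on the inner sum, transferring the difference onto $v_{N-j}$; this, however, simply shifts the regularity issue to the square-root sequence $v_n$, which enjoys an analogous smoothness in the applied context. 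Once this derivative asymptotic is granted, the rest of the proof is a routine Riemann sum estimate.
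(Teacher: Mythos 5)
There is a genuine gap, and you have correctly located it yourself: the convergence \eqref{LFLF} is asserted under the sole hypothesis $u_n \sim n^{-3/2}$, and your argument hinges on the increment asymptotic $u_{j-i}-u_{j-i-1} \sim -\tfrac32 (j-i)^{-5/2}$, which simply does not follow from that hypothesis (take $u_n = n^{-3/2}(1+ n^{-1/4}\sin n)$, say). Neither of your proposed repairs works. In the intended application the sequence $u_n$ plays the role of the kernel $K^{\gb_c^a}_{x,dy}(n)$, and the only asymptotic available for it is Theorem \ref{Pr}, i.e.\ $f_{x,y}(n)\sim \Phi_a(x,y)n^{-3/2}$ with no control whatsoever on $f_{x,y}(n)-f_{x,y}(n-1)$; indeed the entire reason the paper performs Abel summation and then invokes this lemma is that such local (increment-level) information is \emph{not} available. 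Your alternative of summing by parts on the inner sum merely transfers the problem to $v_{N-j}-v_{N-j-1}$, which is equally uncontrolled under $v_n\sim\sqrt n$.

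The paper's proof avoids increments altogether, and this is the missing idea. Partition the inner range $\{tN,\dots,N\}$ into $l$ macroscopic blocks of length $(1-t)N/l$. On each block, $v_{N-j}$ is constant up to a factor $1\pm\gep$, so it can be pulled out of the block sum; the remaining sum of increments of $u$ then \emph{telescopes}, leaving only the difference of $u$ evaluated at the two endpoints of the block. These endpoints are separated by a macroscopic gap of order $N$, so the hypothesis $u_n\sim n^{-3/2}$ suffices to evaluate the difference asymptotically: one gets $N^{-3/2}$ times a difference of the function $r\mapsto r^{-3/2}$ at two distinct arguments. Taking $N\to\infty$ first, then $l\to\infty$ (a Riemann sum in the block index, where the difference quotient of $r^{-3/2}$ produces the derivative $-\tfrac32 r^{-5/2}$), and finally $\gep\to0$, yields the stated limit. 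So the factor $-\tfrac32$ arises from differentiating the \emph{limit profile} $r^{-3/2}$ across blocks, not from a discrete derivative of $u_n$, and the singularity of $(v-u)^{-5/2}$ is harmless exactly as you note since $v-u\geq t-s>0$. Your Riemann-sum outline is the right shape for the final passage to the limit, but without the block-telescoping device the proof cannot be closed under the stated hypotheses.
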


   \begin{proof}[Proof of Lemma \ref{DCG}]
  Let $\gep \in (0,1)$ and $l \geq 2$ be fixed.
 
  We write the left hand of the convergence appearing in equation \eqref{LFLF} as 
 \begin{equation}
  \frac{1}{v_N} \sum_{i=1}^{sN} w_i \sum_{m=0}^{l-1}V_{N,i,m}
 \end{equation} 
   where for $i \in [1,sN], m \in [0,l-1]$, we defined
 \begin{equation}
  V_{N,i,m} := \sum_{n=tN + (1-t)Nm/l}^{tN + (1-t)N(m+1)/l} (u_{n-i}-u_{n-i+1})v_{N-n}.
 \end{equation}
 Defining 
  \begin{equation} \begin{split}
         (\star) & := \sum_{n=tN + (1-t)Nm/l}^{tN + (1-t)N(m+1)/l} \left(u_{n-i}-u_{n-i+1}\right) \sqrt{N(1-t)\left(1-\frac{m}{l}\right)} \\
   &= \left( u_{N\left(t + \frac{(1-t)m}{l} -\frac{i}{N} \right) + N\left(\frac{1-t}{l}\right)}
 - u_{N\left(t + \frac{(1-t)m}{l} -\frac{i}{N} \right)}\right) \sqrt{N(1-t)\left(1-\frac{m}{l}\right)}.  
                   \end{split}
  \end{equation} 
 for  $N$ large enough, the following inequalities hold for every $i \in [1,sN], m \in [0,l-1]$:
 \begin{equation}
  (1-\gep) (\star) \leq V_{N,i,m} \leq 
  (\star) (1+\gep).
 \end{equation} 
 
  We get that, as $N \to \infty$, for every  $i \in [1,sN], m \in [0,l-1]$, one has: 
 \begin{equation} \begin{split}
          \left(t + \frac{(1-t)m}{l} -\frac{i}{N} \right)^{3/2} N^{3/2} 
  \left( u_{N\left(t + \frac{(1-t)m}{l} -\frac{i}{N} \right) + N\left(\frac{1-t}{l}\right)}
 - u_{N\left(t + \frac{(1-t)m}{l} -\frac{i}{N} \right)} \right) \\ 
 = \left(1 + \frac{\frac{1-t}{l}}{\left(t + \frac{(1-t)m}{l} -\frac{i}{N} \right)}\right)^{-3/2} - 1 + o(1),  
                  \end{split}
 \end{equation}  
 so that as $N \to \infty$, the following equivalence holds:
  \begin{equation}
   N \times (\star) \sim \left( \left(1 + \frac{\frac{1-t}{l}}{\left(t + \frac{(1-t)m}
{l} -\frac{i}{N} \right)}\right)^{-3/2} - 1 \right) \frac{\sqrt{(1-t)(1-m/l)}}{ \left(t + \frac{(1-t)m}{l} -\frac{i}{N} \right)^{3/2}} 
  \end{equation} 
   
 As a consequence, as $N \to \infty$, for every $l \geq 2$, we get the following inequalities:
  \begin{equation}
      \begin{split}
   (1- \gep) & \int_{u \in [0,s]} \sqrt{u} \sum_{m=0}^{l-1}
 \left( \left(1 + \frac{\frac{1-t}{l}}{\left(t + \frac{(1-t)m}{l} -u \right)}\right)^{-3/2} - 1 \right)
 \frac{\sqrt{(1-t)(1-m/l)}}{ \left(t + \frac{(1-t)m}{l} -u \right)^{3/2}} du  \\  
      & \leq \liminf_{N \to \infty} \frac{1}{v_N} \sum_{i=1}^{sN} w_i \sum_{m=0}^{l-1}V_{N,i,m} \\
   &  \leq \limsup_{N \to \infty} \frac{1}{v_N} \sum_{i=1}^{sN} w_i \sum_{m=0}^{l-1}V_{N,i,m} \\
    \leq (1 + \gep) & \int_{u \in [0,s]} \sqrt{u} \sum_{m=0}^{l-1}
 \left( \left(1 + \frac{\frac{1-t}{l}}{\left(t + \frac{(1-t)m}{l} -u \right)}\right)^{-3/2} - 1 \right) 
\frac{\sqrt{(1-t)(1-m/l)}}{ \left(t + \frac{(1-t)m}{l} -u \right)^{3/2}} du.
     \end{split}
  \end{equation} 

 For $l \to \infty$, making use of Riemann's sums and of the dominated convergence theorem, the integral appearing in the last term 
 of the above equation becomes equivalent to
 \begin{equation}
    -\frac{3}{2} \int_{u \in [0,s]} \int_{v \in [0,1]} \sqrt{u}  \frac{(1-t)^{3/2}
 \sqrt{1-v}}{\left( t + (1-t)v -u\right)^{5/2}} du dv,
 \end{equation} 
 from which we deduce the result. 
   \end{proof}

  Now we go to the proof of Theorem \ref{main2}.

 \begin{proof}[Proof of Theorem \ref{main2}]

As in the proof of Theorem \ref{main1}, we just have to show finite dimensional convergence.

 We denote by 
  $\bP_{\cB_{1/2}}$ the law of the limiting random set appearing in 
 Theorem \ref{main1} and by $\bP_{1/2}$ the law of 
 the regenerative set with index $1/2$. For $F \in \cF$, we also define  $g_t(F) = \sup ( F \cap [0,t))$.
 
  Let us first compute the limiting quantity we are looking for. For $0< s < t < 1$, we write:
\begin{equation}\label{AFFF}
 \begin{split}
   & \bP_{\cB_{1/2}} (d_s > t)  = \bP_{\cB_{1/2}} (g_t < s, g_1 = g_s) + \bP_{\cB_{1/2}} (g_t < s, g_1 > t) \\
  &  =  \bP_{\cB_{1/2}} ( g_1 \leq s)  \\
   & + \frac{\pi}{2} \int_{u \in [0,s]} \int_{v \in [t-u, 1-u]} \int_{w \in [u+v,1]}
 \bP_{1/2}(g_t \in du,d_t - g_t \in dv) \sqrt{1-w} \bP_{1/2}(g_1 \in dw|d_t = u+v). 
 \end{split}
\end{equation} 
 
 We first make use of the explicit law of $g_1$  (see 
  \cite{Be}[Proposition 3.1]), and we deduce the following equalities:
 \begin{equation}
  \begin{split}
        \bP_{\cB_{1/2}} (d_s > 1) & = \frac{\pi}{2} \int_{[0,s]} \sqrt{1-u} \bP_{1/2} ( g_1 \in du) \\
   & = \frac{\pi}{2} \int_{[0,s]} \sqrt{1-u} \frac{1}{\pi} \frac{1}{\sqrt{u(1-u)}} du \\
   & = \sqrt{s}.
   \end{split}                
   \end{equation}

 Then we note that for every $x \in (t,1)$ and $w \in (x,1)$, the following holds:
  \begin{equation}
   \bP_{1/2}(g_1 \in dw|d_t = x) = \bP_{1/2}(x + g_{1-x} \in dw) = \frac{1}{\pi} \frac{dw}{\sqrt{(w-x)(1-w)}}.
  \end{equation}  
 
  On the other hand, the following equality is well known (see \cite{Bo}):
  \begin{equation}
   \bP_{1/2}(g_t \in du,d_t - g_t \in dv) = \frac{du dv }{2 \pi \sqrt{uv^{3}}}.
  \end{equation}

 Finally one has:

  \begin{equation}\label{CLO}
   \bP_{\cB_{1/2}} (d_s > t)  = \sqrt{s} + \frac{1}{2 \pi} \int_{(u,v) \in [0,s]\times[t,1]}     \sqrt{\frac{1-v }{ u(v-u)^{3}} }
    du dv.
  \end{equation}

 In the same spirit as for the proof of Theorem  \ref{main1}, we then establish the convergence of $\bP_{N,a,\gb_{c}^{a}} (d_s^{N} > t)$
 towards $  \bP_{\cB_{1/2}} (d_s > t)$ as $N \to \infty$, thus proving Theorem \ref{main2}.  A computation similar to the one we made 
 for the proof of Lemma \ref{estZ} leads to the following convergence (for $s \in (0,1)$):
 \begin{equation}
  \bP_{N,a,\gb_{c}^{a}} (d_s^{N} > 1) \to \sqrt{s}. 
 \end{equation} 
 
 Thus we are left with computing the limit of the second term in the right hand side of the equality below:
 \begin{equation}
  \bP_{N,a,\gb_{c}^{a}} (d_s^{N} > t) = \bP_{N,a,\gb_{c}^{a}} (d_s^{N} \geq 1 ) + \bP_{N,a,\gb_{c}^{a}} (d_s^{N} \in (t,1)).
 \end{equation} 
 We write:
  \begin{equation}
 \begin{split}
  \bP_{N,a,\gb_{c}^{a}} (d_s^{N} \in (t,1)) = \frac{v_{\gb^{a}_c}(0)}{ Z_{N,a,\gb_c^{a}} } 
   \sum_{n=0}^{sN}\int_{x \in [0,a]}  \gk ( n,0,dx) \\
     \times \int_{y \in [0,a]}
 \sum_{m=tN}^{N} \frac{K_{x,dy}^{\gb_c^{a}}(m-n)}{v_{\gb_c^{a}}(y)}
 Z_{N-m,a,\gb_c^{a}}(y). 
 \end{split}
  \end{equation} 

  Once again, we do not have access to the local behavior of the quantities $\gk (n,0,x)$
 for $n$ large, 
 thus we use integration by part as in the proof of Theorem \ref{main1} and we have:
  
  \begin{equation}\label{FINI}
 \begin{split}
  & \bP_{N,a,\gb_{c}^{a}} (d_s^{N} \in (t,1)) = \frac{v_{\gb^{a}_c}(0)}{ Z_{N,a,\gb_c^{a}} } \int_{x \in [0,a]}
  \sum_{j=0}^{sN}\gk(j,0,dx) \sum_{m = tN}^{N} \int_{y \in [0,a]} \frac{K_{x,dy}^{\gb^{a}_c}(m-sN) }{v_{\gb^{a}_{c}}(y)}
 Z_{N-m,a,\gb_c^{a}}(y) \\
 & + \frac{v_{\gb^{a}_c}(0)}{ Z_{N,a,\gb_c^{a}} } \sum_{n=0}^{sN-1} \int_{(x,y) \in [0,a]^{2}} 
 \sum_{j=0}^{n}  \gk(j,0,dx) 
 \sum_{m=tN}^{N}  \frac{K_{x,dy}^{\gb^{a}_c}( m-n) - K_{x,dy}^{\gb^{a}_c}( m-n-1) }{v_{\gb^{a}_{c}}(y)}
 Z_{N-m,a,\gb_c^{a}}(y)\\
 &  \phantom{iiiiiiii} + \frac{v_{\gb^{a}_c}(0)}{ Z_{N,a,\gb_c^{a}} } \int_{(x,y) \in [0,a]^{2}}
 \sum_{m = tN}^{N} \frac{K_{0,dy}^{\gb^{a}_c}(m) }{v_{\gb^{a}_{c}}(y)}
 Z_{N-m,a,\gb_c^{a}}(y) \\
  & =: (\star) + (\star \star) + (\star \star \star). 
 \end{split}
  \end{equation} 

  Making use of Lemma \ref{estZ}, it is straightforward to check that $(\star \star \star)$ is
 $\mathcal{O}(N^{-1/2})$ as $N \to \infty$.
 As 
 $N \to \infty$, the following holds:
   
  \begin{equation}
   \begin{split}
     (\star) & \sim \frac{1}{ N^{1/2} C_{a,\Phi}} \int_{ x \in [0,a]} \frac{\sqrt{sN}}{\pi} \frac{  \Pi_{\gb_{c}^{a}}(dx)}
{ \bE_{\Pi^{(2)}}[\tilde{\Phi}_{\gb_c^{a}}/k_{\gb_c^{a}}]} \sum_{m = tN}^{N} \int_{y \in [0,a]}
  \sqrt{N-m} \frac{ C_{a,\Phi}  \tilde{\Phi}_{\gb_c^{a}}(x,y)}{(m-sN)^{3/2}}   dy\\
   & \to  \frac{\sqrt{s}}{\pi} \int_{t}^{1} \frac{\sqrt{1-u}}{(u-s)^{3/2}} du.
   \end{split}
  \end{equation} 
 
  Making use of Lemmas \ref{estZ},
 \ref{DCG} and of the asymptotic behavior of $K_{x,dy}(n)$(which holds uniformly on $[0,a]^{2}$), we easily get the equivalence:
 \begin{equation}
  (\star \star )  \sim -\frac{3}{2 \pi}  \int_{(u,v) \in [0,s] \times [t,1]} \frac{\sqrt{u(1-v)}}{(v-u)^{5/2}} du dv.                
 \end{equation} 
 
 Putting everything together, we finally have the following convergence:
  \begin{equation}\label{convfin}
   \bP_{N,a,\gb_{c}^{a}} (d_s^{N} \in (t,1)) \to \frac{\sqrt{s}}{\pi} \int_{t}^{1} \frac{\sqrt{1-u}}{(u-s)^{3/2}} du 
- \frac{3}{2 \pi}  \int_{(u,v) \in [0,s] \times [t,1]} \frac{\sqrt{u(1-v)}}{(v-u)^{5/2}} du dv.
  \end{equation} 

 On the other hand, integrating by part, we get the equalities:
  
 \begin{equation}
\begin{split}
     \int_{(u,v) \in [0,s]\times[t,1]}  \sqrt{\frac{1-v }{ u(v-u)^{3}} } & = \left[ \int_{v \in[t,1]} 2 \frac{\sqrt{u(1-v)}}{(v-u)^{3/2}} dv \right]_{u=0}^{s} 
 - \frac{3}{2}   \int_{(u,v) \in [0,s] \times [t,1]} \frac{2\sqrt{u(1-v)}}{(v-u)^{5/2}} du dv   \\
         & = 2 \sqrt{s} \int_{t}^{1} \frac{\sqrt{1-v}}{(v-s)^{3/2}} dv
 - \frac{3}{2}   \int_{(u,v) \in [0,s] \times [t,1]} \frac{2\sqrt{u(1-v)}}{(v-u)^{5/2}} du dv.
 \end{split}
 \end{equation} 

 With the help of the above, comparing the equality \eqref{CLO} and the convergence appearing in equation \eqref{convfin}, we are done.

  \end{proof}

   \section{Appendix}\label{sec7}

  The aim of this section is to prove Theorem \ref{Pr}.

 Let us consider the renewal function $U(\cdot)$
 associated to the ascending ladder heights process:
 \begin{equation}
 U(x) := \sum_{k=0}^{\infty} \bP [ H_k \leq x] = \bE[ \mathcal{N}_x] = \int_{0}^{x} \sum_{m=0}^{\infty} u(m,y) dy
\end{equation}
  where $\mathcal{N}_x$ is the cardinality of $\{ k \geq 0, H_k \leq x \}$ and
 $ u(m,y) := \frac{1}{dy} \bP[ \exists k \geq 0, T_{k} = m, H_{k} \in dy]$ is the renewal mass function associated to $(T,H)$.
 It follows in particular from this definition 
 that $U(\cdot)$ is a subadditive increasing function, and in our context it is also continuous. Note also that $U(0) = 1$. 
  We denote by $V(x)$
  the analogous quantity for the process $H^-$, and by $v(m,y)$ the renewal mass function associated
 to the descending renewal $(T^{-},H^{-)}$.

  For $u > 0$, we define $\tau_u := \inf\{ k \geq 1, S_k \geq u \}$. As we did for the Markov renewal kernel, for any
  $k \geq 1$ and for any $B \in \gs(S_1, \ldots, S_{k-1})$,  we introduce
 the useful notation:
   \begin{equation}
     \frac{1}{dx} \bP[ B, S_k \in dx] =: \bP[ B, S_k = x]
   \end{equation} 
  Our proof is based on the following result, which is a specialization of results due to Doney \cite{Do3}[Proposition 18]: 
   \begin{theorem}\label{Dooo}
 For $x,y \in \mathbb{R}^{+}$, let us define two sequences $x_n := x(\gs^{2}  n)^{-1/2} $
 and  $y_n := y(\gs^{2} n)^{-1/2} $.
 For any fixed $\Delta_0 > 0$, the following estimate holds uniformly for $\Delta \in (0,\Delta_0)$:
       Uniformly as $x_n \vee y_n \to 0$, 
   \begin{equation}\label{esn}
    \bP[ S_n \in (x-y-\Delta,x-y], \tau_x > n ] \sim \frac{U(x) \int_{y}^{y + \Delta}V(w)dw}{\gs\sqrt{2 \pi } n^{3/2}}.
   \end{equation} 
%    \item   Let $m(\cdot)$ denotes the density of the brownian meander at time $1$.
%  For any $D > 1$, uniformly on $y_n \in [D^{-1},D]$,
%     \begin{equation}\label{osn}
%      \bP[ S_n \in (x-y-\Delta,x-y], \tau_x > n] \sim \frac{U(x)\bP[T_1 > n] \Delta m \left(y_n\right) }{\gs n^{1/2} } 
%     \end{equation} 
%  as $n \to \infty$ and $x_n \to 0$.
%      \item  Let $q(\cdot,\cdot)$ denote the transition function of the brownian meander at time $1$.
%    For any $D > 1$, uniformly on $y_n \in [D^{-1},D]$ and on $x_n \in [D^{-1},D]$,
%     \begin{equation}\label{gsn}
%     \bP[ S_n \in (x-y-\Delta,x-y], \tau_x > n] \sim \frac{\Delta q(x_n,y_n)}{\gs n^{1/2} }
%     \end{equation} 
%  as $ n \to \infty$.
%    \end{enumerate}
  Moreover, there exists
 a constant $c > 0$ such that the left hand side of equation \eqref{esn} 
 is dominated by a multiple of the right hand side. 
   \end{theorem}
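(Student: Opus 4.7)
The plan is to derive Theorem \ref{Dooo} as a specialization of Doney's precise local limit theorem \cite{Do3}[Proposition 18] to the finite-variance, mean-zero case at hand. Since $\bE[X] = 0$ and $\bE[X^{2}] = \gs^{2} < \infty$, the walk $S$ lies in the classical domain of attraction of a Gaussian law (stable index $\ga = 2$), with normalizing sequence $a_n = \gs\sqrt{n}$ and limiting density at zero equal to $(2\pi)^{-1/2}$; this immediately accounts for the constant $(\gs\sqrt{2\pi})^{-1}$ appearing in \eqref{esn}.

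First I would verify that the hypotheses of \cite{Do3}[Proposition 18] are satisfied in our setup. The continuity of $h(\cdot)$ combined with $\gs^{2} < \infty$ places us in the strongly non-lattice situation in which the local limit theorem holds in its sharpest form, and also ensures that both $U$ and $V$ are continuous and strictly increasing on $\bbR^{+}$. The regime $x_n \vee y_n \to 0$, i.e.\ $x \vee y = o(\sqrt{n})$, is precisely the small-deviation window in which Doney formulates his estimate.

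Next I would invoke the time-reversal that is the structural backbone of Doney's argument: if $\hat S_k := S_n - S_{n-k}$ denotes the reversed walk (which has the same law as $S$ by i.i.d.\ symmetry of the increments), the survival constraint $\{\tau_x > n\}$ together with the endpoint constraint $S_n \in (x-y-\Delta,x-y]$ factorizes into a forward excursion of $S$ confined below the level $x$ and a reverse excursion of $\hat S$ confined above its starting value. By Sparre--Andersen theory, the probabilities of these two half-line confinements produce the renewal factors $U(x)$ and $\int_{y}^{y+\Delta} V(w)\,dw$ respectively, while the local CLT at the endpoint contributes $(\gs\sqrt{2\pi n})^{-1}$; the remaining factor $n^{-1}$ arises from the classical Sparre--Andersen asymptotics of the two confinement events, producing the claimed $n^{-3/2}$ rate and product structure.

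The hard part will be upgrading Doney's pointwise statement to the uniformity in $(x,y)$ and to the global upper bound claimed here. For uniformity, I would use that the remainder terms in Doney's argument depend continuously on $(x,y)$ only through $U(x)$ and through integrals of $V$, both of which are continuous in our setup; uniformity in $\Delta \in (0,\Delta_0)$ follows from monotonicity in $\Delta$ of both sides once $\Delta_0$ is fixed. For the one-sided bound LHS $\leq C\cdot$ RHS valid for all $n$, I would rely on the non-asymptotic companion estimates in \cite{Do3}, combined with the subadditivity $U(x+h) \leq U(x) + U(h)$ (and analogously for $V$), which yields at most linear growth of the renewal functions and thus permits the dominating constant $C$ to be chosen independently of $x$, $y$, $\Delta$ and $n$.
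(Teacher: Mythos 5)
The paper offers no proof of this theorem beyond asserting that it is a specialization of Doney \cite{Do3}[Proposition 18], which is exactly the route you take; your identification of the stable index $\alpha=2$, the norming sequence $a_n=\gs\sqrt{n}$ and the Gaussian density $(2\pi)^{-1/2}$ at the origin correctly accounts for the constant in \eqref{esn}, so the specialization is carried out correctly. Your further sketch of the internal mechanics of Doney's argument (time reversal, ladder-height renewal factorization, the companion non-asymptotic bound) goes beyond what the paper records and is consistent with it, though note that uniformity in $\Delta\in(0,\Delta_0)$ does not follow from monotonicity alone as $\Delta\to 0$ and must be taken from Doney's statement itself.
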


  \begin{proof}[Proof of Theorem \ref{Pr}]

     We denote by $\tilde{S}$ the random walk whose transitions are given by
   \begin{equation}
    \bP_{x}[\tS_1 \in dy] := h(x-y) dy
   \end{equation} 
 and for positive $u$, by $\ttau_u$ the quantity $\inf \{ k \geq 1, \tS_k > u \}$. 
 
  Let  $\gep \in (0,1)$ be fixed.
  Integrating over the first entry point of $S$ into $(a,\infty)$ and on the last 
  location of $S$ in $(a,\infty)$ before hitting the strip, for $x,y \in [0,a]^{2}$, one gets the equalities:
  \begin{equation}\label{FTCHH} \begin{split}
                & f_{x,y}(n) = 
                 \bP_{x}[ S_1 > a, \ldots, S_{n-1} > a, S_n = y] \\
& \phantom{iiiiii} = \int_{(a,\infty)^{2}} h(u-x) \bP_{u}\left[ S_1 > a, \ldots, S_{n-3} > a, S_{n-2} = v\right] h(y-v) du dv \\
 & \phantom{iiiiii} =  \int_{(a,\infty)^{2}} h(u-x) \bP\left[ S_1 > a-u, \ldots, S_{n-3} > a-u, S_{n-2} = v-u\right] h(y-v) dudv \\
  & \phantom{iiiiii} =  \int_{(a,\infty)^{2}} h(u-x) \bP\left[ \ttau_{u-a} > n-2, \tS_{n-2} = u-v\right] h(y-v) du dv \\
    & \phantom{iiiiii} =  \int_{\cD_{1}^{n}} \ldots +
 \int_{\cD_{2}^{n}} \ldots
%  + \int_{\cD_{3}^{n}} \ldots  + \int_{\cD_{4}^{n}} \ldots
  =: \cI_{1}^{n} +  \cI_{2}^{n},
%  + \cI_{3}^{n} + \cI_{4}^{n},
                 \end{split}
  \end{equation} 
  where we defined the subsets of $(a,\infty)^{2}$:
   \begin{equation}
    \begin{split}
     & \cD_{1}^{n} := \left\{ (u,v) \in (a,\infty)^{2}, u \vee v \leq \gep \sqrt{n} \right\} \\
     & \cD_{2}^{n} := \left\{ (u,v) \in (a,\infty)^{2}, u \vee v \geq \gep \sqrt{n}  \right\}. \\
%  \leq  u \vee v \leq (1/\gep) \sqrt{n}
%      & \cD_{3}^{n} := \left\{ (u,v) \in (a,\infty)^{2},  \gep \sqrt{n}  \leq u \wedge v \leq  u \vee v \leq (1/\gep) \sqrt{n}\right\} \\
%      & \cD_{4}^{n} := \left\{ (u,v) \in (a,\infty)^{2}, u \vee v \geq (1/\gep) \sqrt{n} \right\} \\
    \end{split}
   \end{equation}

 We approximate the integral $\cI_{1}^{n}$ by below and by above by breaking the range of integration over $v$ 
 into subintervals of length
 $\eta > 0$, then we use the estimate given in equation \eqref{esn} and finally let $\eta \searrow 0$ to get 
 the convergence:  
   \begin{equation}\label{decc}
   \lim_{\gep \searrow 0} \lim_{n \to \infty} \sup_{x,y \in [0,a]^{2}}
  \left|\gs \sqrt{2 \pi}n^{3/2} \cI_{1}^{n}  - \int_{(a,\infty)} h(u-x) V(u-a) du \int_{(a,\infty)}  U(v-a) h(y-v)  dv \right| = 0.
   \end{equation}  
  Then we note that 
  \begin{equation} \begin{split}
         & \int_{(a,\infty)} h(u-x) V(u-a) du   = \int_{0}^{\infty} V(u) h(u+a-x)du \\
    & \phantom{iiiii} = \int_{0}^{\infty} h(u+a-x)du + \int_{0}^{\infty} \int_{ y \in [0,u]}\sum_{m \geq 1} v(m,w)  h(u+a-x) du dw\\
   & \phantom{iiiii} = \bP[ H_1 \geq a - x, T_1 = 1] + \int_{ y \in [0,\infty)} \left( \int_{u \in [y,\infty)} h(u+a-x) du \right) \sum_{m \geq 1} v(m,y) dy.  
                   \end{split}
  \end{equation} 
    We use of the well known \textit{duality lemma} (see \cite{Fel2}[chapter XII]),
 which asserts that the following equality holds:
   \begin{equation}
    v(m,y) =  \bP[ S_1 \leq 0, \ldots, S_m \leq 0, -S_m = y]. 
   \end{equation}  
    Making use of the Markov property, we then get the equalities:
   \begin{equation}
    \begin{split}
 & \int_{(a,\infty)} h(u-x) V(u-a) du   \\
 &  = \bP[ H_1 \geq a-x, T_1 = 1] + \sum_{ m \geq 1} \int_{0}^{\infty} \bP[ S_1 \leq 0, \ldots, S_{m} \leq 0, S_m = -y] \bP[S_1 \geq y+a-x] \\
 &  = \bP[ H_1 \geq a-x, T_1 = 1] + \sum_{ m \geq 1} \bP[ H_1 \geq a-x, T_1 = m]\\
 &  = \bP[ H_1 \geq a-x].
    \end{split}
   \end{equation} 

 A very similar computation yields the equality 
   \begin{equation}
   \int_{(a,\infty)}  U(v-a) h(y-v)  dv =  \bP[ H_1^{-} \geq a-y].
   \end{equation} 
 
  To complete the proof of Theorem \ref{Pr}, we are left with showing that
  \begin{equation}\label{TS}
   n^{3/2} \sup_{(x,y) \in [0,a]^{2}}  \cI_{2}^{n} \to 0
  \end{equation} 
  as $n \to \infty$.

%  We recall that there exists a slowly varying function $l(\cdot)$ 
%  such that $\bP[T_{1}^{-} > n] \sim l(n)/\sqrt{n}$ as $u \to \infty$, and on the other hand the classical renewal 
% theorem ensures that $V(u) \sim u/\bE[H_1^{-}]$ when $u \to \infty$ (we recall that the fact that $X$ is square 
%  integrable entails in particular the finiteness of $\bE[H_{1}^{-}])$.

%    To deal with
%  the term $n^{3/2} \cI_{2}^{n}$, f
% for reasons of symmetry, we may assume that  
%   $\cD_{2}^{n} =\left\{ (u,v) \in (a,\infty)^{2}),  u \leq \gep \sqrt{n} \leq v \leq (1/\gep)\sqrt{n} \right\}$.
 
 We recall that as the density $h(\cdot)$ is bounded, it is a consequence of 
 Stone's local limit theorem that there exists a constant $c > 0$ such that:
  \begin{equation}
    \sup_{n \in \N} \sup_{x \in \bbR} \sqrt{n} \bP[ S_n = x] \leq c. 
  \end{equation} 
 
 Relying on this domination, the convergence \eqref{TS} is easy. In fact, for fixed $\gep > 0$,
 as soon as $n$ is large enough ($c > 0$ is a constant which may vary from line to line and
 which is independent from $x$ and $y$):
   \begin{equation} \begin{split}
 \sup_{(x,y) \in [0,a]^{2} }n^{3/2}  \cI_{2}^{n} & \leq n^{3/2} \sup_{(x,y) \in [0,a]^{2} }
 \int_{ \cD_{2}^{n}} h(u-x) \bP\left[ \tS_{n-2} = u-v\right] h(y-v) du dv \\
  & \leq c  n \sup_{(x,y) \in [0,a]^{2} }
 \int_{ \cD_{2}^{n}} \left(\frac{u \vee v}{\gep \sqrt{n}}\right)^{2} h(u-x) h(y-v) du dv \\
  & \leq \frac{c}{\gep^{2}} \int_{\gep \sqrt{n}-a} u^{2} h(u) du \\
  & \leq \frac{c}{\gep^{2}} \int_{\gep \sqrt{n}/2} u^{2} h(u) du
  \end{split}
   \end{equation} 
 and recalling that $X \in L^{2}$, the last inequality is sufficient to conclude. 

%  Going along the same lines as above, one might show that, for large enough $n$,
%  \begin{equation}
%   \sup_{(x,y) \in [0,a]^{2}}n^{3/2} \cI_{3}^{n} \leq \frac{c \sup_{(u,v) \in [\gep, 1/\gep]^{2} }q(u,v) }{\gep^{4} n}.
%  \end{equation} 
%  Finally, 
 
 \end{proof} 
  
  \textbf{ Acknowledgement}: The author is very grateful to Francesco Caravenna for constant support during this work.

 \bibliographystyle{alpha}

  \bibliography{BibliCC}

\end{document}